\renewcommand{\AA}{\mathbb{A}}
\newcommand{\CC}{\mathbb{C}}
\newcommand{\gm}{{\mathbb G}_m}
\newcommand{\Z}{\mathbb{Z}}
\newcommand{\kk}{\Bbbk}
\newcommand{\LL}{\mathbf L}
\newcommand{\RR}{\mathbf R}
\newcommand{\DD}{\mathcal D}
\newcommand{\HH}{\mathcal H}
\newcommand{\KK}{\mathcal K}
\newcommand{\MM}{\mathcal M}
\newcommand{\OO}{\mathcal O}
\newcommand{\xx}{\underline{x}}
\newcommand{\fii}{\varphi}
\newcommand{\fia}{\fii_{\alpha}}
\newcommand{\fiai}{\fia^{-1}}
\newcommand{\bb}{\bar{b}}
\newcommand{\dd}{\partial}
\newcommand{\D}{\text{D}^\text{b}}
\newcommand{\Hom}{\operatorname{Hom}}
\newcommand{\Ext}{\operatorname{Ext}}
\newcommand{\DR}{\operatorname{DR}}
\newcommand{\Spec}{\operatorname{Spec}}
\newcommand{\id}{\operatorname{id}}
\newcommand{\supp}{\operatorname{supp}}
\newcommand{\ra}{\rightarrow}
\newcommand{\lra}{\longrightarrow}
{\theoremstyle{definition}
\newtheorem{defi}{Definition}[section]}
{\theoremstyle{remark}
\newtheorem{nota}[defi]{Remark}}
\newtheorem{teo}[defi]{Theorem}
\newtheorem{prop}[defi]{Proposition}
\newtheorem{lema}[defi]{Lemma}
\newtheorem{coro}[defi]{Corollary}
\begin{document}

\title{Exponents of some one-dimensional Gauss-Manin systems}
\author{Alberto Castaño Domínguez}
\thanks{The author is partially supported by FQM218, P12-FQM-2696, MTM2013-46231-P, ERDF and the project SISYPH: ANR-13-IS01-0001-01/02 and DFG grants HE 2287/4-1 and SE 1114/5-1.}
\email{alcas@hrz.tu-chemnitz.de}
\address{Fakultät für Mathematik, Technische Universität Chemnitz. Reichenhainer Str. 39 09126 Chemnitz (Germany)}
\subjclass[2010]{Primary 14F10, Secondary 13F25, 14D05}

\begin{abstract}
In this paper we provide a purely algebraic characterization of the exponents of one-dimensio\-nal direct images of a structure sheaf by a rational function, related to the vanishing of the cohomologies of a certain Koszul complex associated with such a morphism. This can be extended to a more general family of Gauss-Manin systems. As an application, we calculate a set of possible exponents of the Gauss-Manin cohomology of some arrangements of hyperplanes with multiplicities, relevant to Dwork families and mirror symmetry.
\end{abstract}

\maketitle

\section{Introduction}
Let $\kk$ be an algebraically closed field of characteristic zero. An algebraic variety, or just variety, will mean for us an equidimensional quasi-projective separated finite type scheme over $\kk$, reducible or not. For any smooth variety $\mathcal{X}$, $\D(\DD_{\mathcal{X}})$ will denote the category of bounded complexes of $\DD_{\mathcal{X}}$-modules.

For an open subvariety of the affine line, the exponents of a $\DD$-module over it are strongly related to the monodromy of its solutions. This notion is topological in nature when $\kk=\CC$, but we can manage to work in an algebraic way with a similar concept, and because of that, we will usually use both names, monodromy and exponents, to denote the phenomenon and the object of study. Although this theory can be constructed in any dimension thanks to the formalism of the $V$-filtration, the Bernstein-Sato polynomial and the vanishing cycles of Malgrange and Kashiwara (cf. \cite{MalBernstein,KasVanishing,MaisMeb} or the appendix by Mebkhout and Sabbah at \cite[Ch. III, \S4]{Meb6oper}), it is defined in a much more simple way in dimension one. In fact, we will follow the approach of \cite[\S2.11]{KaESDE}.

The main aim of this paper is to prove the following result:

\begin{teo}\label{Koszul}
Let $n$ be a fixed positive integer, and let $g\in\kk[x_1,\ldots,x_n]$ be a nonzero polynomial. Now let $R=\kk((t))[x_1,\ldots,x_n,g^{-1}]$, $f\in\kk[x_1,\ldots,x_n,g^{-1}]$, and denote by $f'_i$ the partial derivatives of $f$ with respect to the variables $x_i, i=1,\ldots,n$, and by $G$ the closed subvariety $\{g(\xx)=0\}\subseteq\AA^n$. Let $\alpha\in\kk$ and let $\fia$ be the endomorphism of $\kk((t))$ given by $\dd_t-\alpha t^{-1}$. Denote also by $f$ the associated morphism $\AA^n-G\ra \AA^1$. Then, $\alpha\!\mod\Z$ is not an exponent at the origin of any of the $\DD_{\AA^1}$-modules $\HH^if_+\OO_{\AA^n-G}$ if and only if the morphism
$$\begin{array}{rrcl}
\Phi:&R^{n+1}&\lra&R,\\
&(a,b^1,\ldots,b^n)&\longmapsto&(f-t)a+(\dd_1+f'_1\fia)b^1+\cdots+(\dd_n+f'_n\fia)b^n\end{array}$$
is surjective. If it is not surjective, the number of Jordan blocks associated with $\alpha$ of the zeroth cohomology is the dimension of the cokernel of $\Phi$ as a $\kk$-vector space.
\end{teo}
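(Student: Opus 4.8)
The plan is to realise $f_+\OO_{\AA^n-G}$, twisted by $t^{\alpha}$ and formalised at the origin, by an explicit de Rham complex, to recognise that complex as the Koszul complex of the $n+1$ commuting operators occurring in $\Phi$, and then to read off the exponents from the degeneration of the associated spectral sequence.

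First I would record, in the spirit of \cite[\S~2.11]{Ka1}, the local dictionary at $0\in\AA^1$: for a regular holonomic $\DD_{\AA^1}$-module $M$, passing to $M_{((t))}:=M\otimes_{\kk[t]}\kk((t))$ with its $\dd_t$-action recovers the meromorphic connection of $M$ at $0$; then $\alpha\!\mod\Z$ is an exponent of $M$ precisely when the residue of $M_{((t))}$ has an eigenvalue in $\alpha+\Z$, i.e.\ precisely when $\fia=\dd_t-\alpha t^{-1}$ fails to be bijective on $M_{((t))}$, and --- since a regular connection (even after the twist by $t^{\alpha}$) has vanishing de Rham index at $0$ --- $\fia$ is injective on $M_{((t))}$ if and only if it is surjective. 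Moreover the kernel of $\fia$ on a regular connection selects exactly one vector from each Jordan block of the residue whose eigenvalue lies in $\alpha+\Z$, so $\dim_{\kk}\ker\fia=\dim_{\kk}\operatorname{coker}\fia$ equals the number of Jordan blocks of the local monodromy with eigenvalue $e^{2\pi i\alpha}$, that is, the number of Jordan blocks ``associated with $\alpha$''. This extends to bounded complexes of regular holonomic $\DD_{\AA^1}$-modules via the hypercohomology spectral sequence, since on the formal punctured disk only the de Rham degrees $0$ and $1$ can survive.

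Next I would write down the direct image. Using the graph embedding $i_f\colon\AA^n-G\hookrightarrow(\AA^n-G)\times\AA^1$ one has $f_+\OO_{\AA^n-G}\simeq\RR(p_t)_*\DR_{(\AA^n-G)\times\AA^1/\AA^1}(i_{f+}\OO_{\AA^n-G})$; twisting by $t^{\alpha}$ and base-changing to $\kk((t))$ is harmless since $\kk((t))$ is flat over $\kk[t]$, so it suffices to understand the total de Rham complex (in the $x_i$ together with $t$) of $i_{f+}\OO_{\AA^n-G}\otimes t^{\alpha}$ over $R$. Presenting $i_{f+}\OO_{\AA^n-G}$ through the generator attached to the equation $t-f$ of the graph and performing the standard Koszul reduction, this total complex becomes the Koszul complex of the $n+1$ operators $f-t,\ \dd_1+f'_1\fia,\ \dots,\ \dd_n+f'_n\fia$ on $R$. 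These operators pairwise commute: in $[\dd_i+f'_i\fia,\dd_j+f'_j\fia]$ the cross terms $\pm(\dd_if'_j)\fia$ cancel because $\dd_if'_j=\dd_jf'_i$, while $[f-t,\dd_i]=-f'_i$ is cancelled by $[f-t,f'_i\fia]=f'_i[f-t,\fia]=f'_i$. In this model $\Phi$ is the last differential of the Koszul complex, so $\operatorname{coker}\Phi$ is the top cohomology ($\DD$-module degree $0$) of $\RR\Gamma_{\mathrm{dR}}\bigl(\Spec\kk((t)),\ f_+\OO_{\AA^n-G}\otimes t^{\alpha}\bigr)$.

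Finally I would run the hypercohomology spectral sequence $E_2^{p,q}=H^p_{\mathrm{dR}}\bigl(\Spec\kk((t)),\ \HH^qf_+\OO_{\AA^n-G}\otimes t^{\alpha}\bigr)\Rightarrow H^{p+q}$ of this total complex. Since de Rham cohomology of $\Spec\kk((t))$ is concentrated in two consecutive degrees, all higher differentials vanish and $E_2=E_\infty$; together with $\HH^{>0}f_+\OO_{\AA^n-G}=0$ this gives, in top degree, $\operatorname{coker}\Phi\cong\operatorname{coker}\bigl(\fia\ \text{on}\ (\HH^0f_+\OO_{\AA^n-G})_{((t))}\bigr)$. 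By the opening lemma this is nonzero exactly when $\alpha\!\mod\Z$ is an exponent of $\HH^0f_+\OO_{\AA^n-G}$, and its $\kk$-dimension is the number of Jordan blocks associated with $\alpha$ of that module --- which is the last assertion of the theorem, and one implication of the first. For the converse one needs that $\alpha\!\mod\Z$ is an exponent of some $\HH^if_+\OO_{\AA^n-G}$ as soon as it is an exponent of any of them; equivalently, that when $\Phi$ is surjective the whole Koszul complex above is acyclic and not merely surjective in its last term. This is a concentration (Artin-type vanishing) statement for the affine variety $\AA^n-G$ with coefficients in the rank-one connection $f^{-\alpha}$, and it is the point I expect to be the main obstacle; granting it, the degenerate spectral sequence together with the equivalence of injectivity and surjectivity of $\fia$ on each regular $\HH^qf_+\OO_{\AA^n-G}$ yields ``$\Phi$ surjective $\iff$ all $E_2^{p,q}=0$ $\iff$ $\alpha\!\mod\Z$ is an exponent of no $\HH^if_+\OO_{\AA^n-G}$''.
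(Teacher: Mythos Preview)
Your overall architecture---graph factorization, flat base change to $\kk((t))$, recognition of a Koszul complex whose top cohomology is $\operatorname{coker}\Phi$, and a two-column spectral sequence relating this to the exponents of the $\HH^if_+\OO_{\AA^n-G}$---matches the paper's. There are, however, two places where your argument diverges from the paper and where it is incomplete.

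First, the passage from the total twisted de Rham complex of $i_{f+}\OO_{\AA^n-G}$ to the Koszul complex of $f-t,\ \dd_i+f'_i\fia$ acting on $R$ is not a ``standard Koszul reduction''. The paper obtains this identification through a genuine duality step: it rewrites the obstruction as
\[
\RR^{n+1}\Hom_{\widehat\DD}\bigl(\widehat\DD/(D-\alpha,\dd_1,\ldots,\dd_n),\,M_{\text{loc}}\bigr)
\]
via tensor--hom adjunction and extension of scalars, then uses that $M_{\text{loc}}$ is \emph{self-dual} (being the direct image of the self-dual $\kk[\xx,g^{-1}]$ under a closed immersion) to flip this to
\[
\RR^{n+1}\Hom_{\widehat\DD}\bigl(M_{\text{loc}},\,\widehat\DD/(D+1+\alpha,\dd_1,\ldots,\dd_n)\bigr),
\]
and only \emph{then} applies the presentation $M_{\text{loc}}\cong\widehat\DD/(f-t,\dd_i+f'_i\dd_t)$ to get the Koszul complex of the stated operators on $R\cong R\cdot t^{-1-\alpha}$. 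Your direct route computes instead the Koszul complex of $\dd_1,\ldots,\dd_n,\fia$ on the infinite-rank $R$-module $M_{\text{loc}}$; collapsing that to the Koszul complex on $R$ is precisely what the duality accomplishes, and you have not supplied an alternative.

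Second---and this is the point you yourself flag---the implication ``$\Phi$ surjective $\Rightarrow$ $\alpha$ is an exponent of no $\HH^i$'' is left as an Artin-type vanishing conjecture in your sketch. In the paper this is handled in one line, and not geometrically: once the $n+1$ operators $f-t,\ \dd_i+f'_i\fia$ are checked to commute pairwise, the paper invokes \cite[9.1, Corollaire~1]{Bo} for Koszul complexes of commuting endomorphisms, which gives that surjectivity of the last differential forces acyclicity of the entire Koszul complex. With that in hand, your spectral sequence (or equivalently the paper's Corollary right after the theorem) immediately yields that $\alpha\!\mod\Z$ is an exponent of no $\HH^i$. So the ``main obstacle'' you anticipated is dissolved by a purely algebraic fact about Koszul complexes, and no concentration theorem for affine varieties is required.
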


As the reader can check, the statement and its proof are completely independent of the choice of $\kk$, providing a purely algebraic way of dealing with the exponents of a morphism for any such field, apart from their definition itself.

As indicated in the abstract, we will apply this result to a more general context, regarding Gauss-Manin systems over an open subset of the affine line. However, the notion of exponent is really local in nature, so that an open subvariety actually plays an irrelevant role and in the following we will consider everything over the whole affine line.

In the final section we provide two examples illustrating the usefulness of the result. More concretely, we start with a well-known fact about quasi-homogeneous singularities and then we give a result about the exponents of a Gauss-Manin system associated to a special family of arrangements of hyperplanes, which is our second main result:

\begin{teo}\label{exponentes}
Let $(w_0,\ldots,w_n)\in\Z_{>0}^{n+1}$ be an $(n+1)$-tuple of positive integers, and let $\lambda=x_1^{w_1}\cdot\ldots\cdot x_n^{w_n}(1-x_1-\cdots-x_n)^{w_0}$. Then $\alpha\in\kk$ is an exponent at the origin of some cohomology of $\lambda_+\OO_{\AA^n}$ only if $w_i\alpha$ is an integer for some $i=0,\ldots,n$. Moreover, the multiplicity of every exponent of the form $j/w_i$ is the same, without counting coincidences among some $j/w_i$ for different values of $i=0,\ldots,n$ and $j=1,\ldots,w_i$.
\end{teo}

This calculation does not belong only to the realm of hyperplane arrangements, but appears in other interesting contexts. Namely, it can appear when we study the Gauss-Manin cohomology of a generalized Dwork family (cf. \cite[\S3]{CaDwork} and \cite{KaAnother} to know more, respectively, about that relation or about Dwork families in general). That was in fact the main motivation for overcoming this problem. In addition, in \cite[\S3]{CaDwork} we explain how the Gauss-Manin system associated with the morphism $\lambda$ is strongly related to the restriction of a linear form to a torus in $\AA^{n+1}$ (in fact the latter is just an inverse image by an \'{e}tale covering of the former), a setting already treated in \cite{DouSab}, for instance, and of importance in mirror symmetry, for it gives a description of the quantum cohomology of a weighted projective space.

\textbf{Acknowledgements.} This work evolved from a part of the doctoral thesis of the author, advised by Luis Narv\'{a}ez Macarro and Antonio Rojas Le\'{o}n. The author wants to thank the former for suggesting the idea that motivated the main result to him, and both of them for their support and encouragement. He also want to thank them and Christian Sevenheck for their useful comments and discussions about this work, Jean-Baptiste Teyssier for his interesting questions regarding the main result and the anonymous referees for their many useful comments to improve this text.

\section{Preliminaries}

In this section we will recall the basic concepts from $\DD$-module theory that we will need in the following.

\begin{defi}
Let $f:X\ra Y$ be a morphism of smooth varieties. The \emph{direct image} of complexes of $\DD_X$-modules is the functor $f_+:\D(\DD_X)\ra \D(\DD_Y)$ given by
$$f_+\MM:=\RR f_*\left(\DD_{Y\leftarrow X}\otimes_{\DD_X}^{\LL}\MM\right),$$
where $\DD_{Y\leftarrow X}$ is the $\left(f^{-1}\DD_Y,\DD_X\right)$-bimodule
$$\DD_{Y\leftarrow X}:=\omega_X\otimes_{f^{-1}\OO_Y}f^{-1}\Hom_{\OO_{Y}} \left(\omega_Y,\DD_Y\right).$$
The \emph{inverse image} of complexes of $\DD_Y$-modules is the functor $f^+:\D(\DD_Y)\ra \D(\DD_X)$ given by
$$f^+\MM:=\DD_{X\ra Y}\otimes_{f^{-1}\DD_Y}^{\LL}f^{-1}\MM,$$
where $\DD_{X\ra Y}$ is the $\left(\DD_X,f^{-1}\DD_Y\right)$-bimodule
$$\DD_{X\ra Y}:=\OO_X\otimes_{f^{-1}\OO_Y} f^{-1}\DD_Y.$$
\end{defi}

\begin{nota}
When $f:X=Y\times Z\ra Z$ is a projection, $\DD_{Z\leftarrow X}\otimes_{\DD_X}^{\LL}\MM$ is nothing but a shifting by $\dim Y$ places to the left of the relative de Rham complex of $\MM$
$$\DR_f(\MM):=0\lra\MM\lra\MM\otimes_{\OO_{X}}\Omega_{X/Z}^1\lra\ldots\lra \MM\otimes_{\OO_{X}}\Omega_{X/Z}^n\lra0,$$
so we will have $f_+\cong\RR f_*\DR_f(\bullet)[\dim Y]$ (\hspace{-.5pt}\cite[Ch. I, Lem. 5.2.2]{Meb6oper}).
\end{nota}

We will be interested in the case in which $X$ is an open subvariety of the affine line. From now on, we will denote by $D_x$ the product $x\dd_x$, omitting the variable as long as it is clear from the context.

\begin{defi}
A \emph{Kummer $\DD$-module} is the quotient $\KK_\alpha=\DD_{\gm}/(D-\alpha)$, for any $\alpha\in\kk$.
\end{defi}

\begin{nota}
Note that any two Kummer $\DD$-modules $\KK_\alpha$ and $\KK_\beta$ are isomorphic if and only if $\alpha-\beta$ is an integer. Then $\KK_\alpha\cong\OO_{\gm}$ for any $\alpha\in\Z$.
\end{nota}

\begin{prop}\label{regirreg}
Let $\MM$ be a holonomic $\DD_X$-module, let $p$ be a point of $X$, and fix a formal parameter $x$ at $p$ such that $\widehat{\OO}_{X,p}\cong\kk[[x]]$. The tensor product $\MM\otimes_{\OO_X}\kk((x))$ can be decomposed as the direct sum of its regular and purely irregular parts.

Now assume that $\MM\otimes_{\OO_X}\kk((x))\cong\kk((x))[D]/(L)$, where
$$L=\sum_ix^iA_i(D)\in\kk[[x]][D],$$
with $\deg_DL=g\geq g_0=\deg_DA_0$ (so that $A_0\neq0$). As a consequence, the rank of $\left(\MM\otimes_{\OO_X}\kk((x))\right)_{\emph{reg}}$ is $g_0$, and if this last degree is positive and $A_0(t)=\gamma\prod_i(t-\alpha_i)^{n_i}$, its composition factors are $\KK_{\alpha_i,p}$ with multiplicity $n_i$, where $\KK_{\beta,p}$ is the tensor product $\kk((x))\otimes_{\kk[x^{\pm}]}\KK_{\beta}\cong\kk((x))[D]/(D-\beta)$.

Moreover, if the roots of $A_0(t)$ are not congruent modulo $\Z$, then
$$\left(\MM\otimes_{\OO_X}\kk((x))\right)_{\emph{reg}}\cong \kk((x))[D]/(A_0(D))\cong \bigoplus_i\kk((x))[D]/ (D-\alpha_i)^{n_i}.$$
\end{prop}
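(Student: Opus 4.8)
The plan is to analyze the local module $M:=\MM\otimes_{\OO_X}\kk((x))$ directly from the cyclic presentation $M\cong\kk((x))[D]/(L)$ with $L=\sum_i x^iA_i(D)$ and $g=\deg_D L\geq g_0=\deg_D A_0$. The whole statement is about the regular part, so the first step is to recall the Turrittin--Levelt decomposition $M\cong M_{\mathrm{reg}}\oplus M_{\mathrm{irr}}$ into regular and purely irregular parts (the existence of which is standard over $\kk((x))$), and to reduce everything to understanding $M_{\mathrm{reg}}$. The basic idea is that after clearing $x$-powers, a regular meromorphic connection of rank $r$ is one admitting a cyclic vector annihilated by an operator of the form $D^r+c_{r-1}(x)D^{r-1}+\dots+c_0(x)$ with all $c_j\in\kk[[x]]$, and its exponents are read off from the reductions $c_j(0)$; equivalently $M_{\mathrm{reg}}\cong\kk((x))[D]/(A_0(D)\cdot u(x,D))$ only when $A_0(0)$ encodes the regular spectrum. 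So the heart of the argument is a Newton-polygon computation on $L$.

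First I would pass to the Newton polygon of $L$ at the point $p$ in the variables $(x,D)$: the monomials appearing are $x^i D^{j}$ with $j\leq\deg_D A_i$. The key observation is that the term $x^0 A_0(D)$ contributes the vertical segment from height $0$ to height $g_0$ on the $x=0$ axis, and this is exactly the slope-zero (regular) part of the polygon. By the standard theory of the Newton polygon of a differential operator (Malgrange, Deligne), the slope-zero part of $L$ has horizontal length $g_0$ and governs a subquotient of rank $g_0$ which is precisely the regular part; its ``indicial polynomial'' — the characteristic polynomial of the residue — is the principal symbol along that edge, namely $A_0(D)$ up to the scalar $\gamma$. Hence $\operatorname{rank}M_{\mathrm{reg}}=g_0$, and the eigenvalues of the residue (equivalently, the exponents) are the roots $\alpha_i$ of $A_0$ with the stated multiplicities $n_i$; each generalized eigenspace of dimension $n_i$ contributes a composition series whose factors are all $\KK_{\alpha_i,p}$, since over $\kk((x))$ a regular connection with a single exponent $\alpha$ is a successive extension of copies of $\kk((x))[D]/(D-\alpha)$. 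This yields the middle assertion about composition factors.

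For the last assertion, assuming the $\alpha_i$ are pairwise incongruent mod $\Z$, I would use the classical fact that a regular meromorphic connection over $\kk((x))$ whose exponents are pairwise non-congruent is determined by its exponents and splits as $\bigoplus_i \kk((x))[D]/(D-\alpha_i)^{n_i}$: one can produce the idempotents projecting onto the generalized eigenspaces of the residue because the congruence condition guarantees $\Hom(\KK_{\alpha_i,p},\KK_{\alpha_j,p})=0$ for $i\neq j$, so there are no nontrivial extensions between the blocks, and each block, being regular with a single exponent of multiplicity $n_i$, can be normalized to $\kk((x))[D]/(D-\alpha_i)^{n_i}$ via a gauge transformation in $\mathrm{GL}_{n_i}(\kk[[x]])$ that removes the off-diagonal $x$-power terms (a shearing/Gauss-elimination argument on the connection matrix using incongruence of a single exponent with itself shifted by a positive integer). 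Putting this together with $M_{\mathrm{reg}}\cong\kk((x))[D]/(A_0(D))$ — which follows because $A_0$ has distinct-mod-$\Z$ roots so its own $\kk((x))[D]$-quotient is already semisimple in the above sense — gives the displayed isomorphism.

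The main obstacle I expect is the passage from the cyclic operator $L$ to the residue of the regular part, i.e. justifying cleanly that the slope-zero edge of the Newton polygon of $L$ has indicial polynomial exactly $A_0(D)$ (up to scalar) and cuts out a subquotient isomorphic to $M_{\mathrm{reg}}$; one has to be careful that lower-order-in-$D$ but higher-order-in-$x$ terms (the $x^i A_i$ with $\deg_D A_i<g_0$) do not perturb the regular spectrum, which is precisely what the Newton-polygon formalism guarantees but which requires invoking it correctly. The splitting in the incongruent case and the identification of composition factors are then comparatively routine consequences of the non-vanishing/vanishing pattern of $\Hom$ between Kummer modules noted in the remark following the definition of Kummer $\DD$-modules.
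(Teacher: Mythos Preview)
Your plan is correct and matches the paper's approach: the paper simply cites Malgrange for the regular/irregular decomposition and Katz \cite[2.11.7]{Ka1} for the rest, and what you have sketched---the Newton-polygon reading of the slope-zero edge giving indicial polynomial $A_0(D)$, hence rank $g_0$ and composition factors $\KK_{\alpha_i,p}$, together with the splitting under incongruent exponents via $\Hom/\Ext$ vanishing between Kummer modules---is exactly the content of that reference. Your identified obstacle (cleanly extracting $M_{\mathrm{reg}}$ as the slope-zero subquotient) is the only nontrivial point, and it is handled by the standard Newton-polygon formalism you invoke.
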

\begin{proof}
The decomposition into regular and purely irregular parts of the tensor product $\MM\otimes_{\OO_X}\kk((x))$ is a well-known fact of the theory of integrable connections (cf. \cite[Ch. III, Thm. 1.5, Cor. 1.7]{MalEqDiff}).

The rest is analogous to \cite[Cor. 2.11.7]{KaESDE}. Although that result and those on which it depends at \cite[\S2.11]{KaESDE} are stated over $\CC$, their proofs are purely algebraic and, in fact, can be generalized for any algebraically closed field of characteristic zero.
\end{proof}

\begin{prop}[Formal Jordan decomposition lemma]\label{Jordan}
Let $\MM$, $p$ and $x$ be as before, and suppose that $\MM$ is regular at $p$. Then,
\begin{enumerate}
\item[(i)] $\MM\otimes_{\OO_X}\kk((x))$ is the direct sum of regular indecomposable $\kk((x))[D]$-modules;
\item[(ii)] writing
    $$\operatorname{Loc}(\alpha,n_\alpha):=\kk((x))[D]/ (D-\alpha)^{n_\alpha},$$
    then, for any two $\kk((x))[D]$-modules $\operatorname{Loc}(\alpha,n_\alpha)$ and $\operatorname{Loc}(\beta,n_\beta)$, and $i=0,1$, the vector space $\Ext_{\DD_X}^i(\operatorname{Loc}(\alpha,n_\alpha), \operatorname{Loc}(\beta,n_\beta))$ has dimension $\min(n_\alpha,n_\beta)$ if $\alpha-\beta\in\Z$ and zero otherwise;
\item[(iii)] any regular indecomposable $\kk((x))[D]$-module is isomorphic to
    $\operatorname{Loc}(\alpha,n_\alpha)$, where $\alpha$ is unique modulo the integers;
\item[(iv)] given $\alpha\in\kk$, the number of indecomposables of type $\operatorname{Loc}(\alpha,m)$ in the decomposition of $\MM\otimes_{\OO_X}\kk((x))$ is $\dim_\kk\Hom_{\DD_X}\left(\MM,\operatorname{Loc}(\alpha,1)\right)$.
\end{enumerate}
\end{prop}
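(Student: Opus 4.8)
The relevant category here is that of finite-dimensional $\kk((x))[D]$-modules (with $D=x\partial_x$), i.e. of germs at $p$ of integrable connections on the formal punctured disc, and the $\Ext$-groups in the statement are the ones computed there (equivalently over $\widehat{\DD}_{X,p}\cong\kk((x))[D]$); that ring has global dimension one, which is why only $\Ext^0$ and $\Ext^1$ appear. The plan is to prove (ii) first and then deduce (i), (iii) and (iv) from it, using in addition the lattice theory already invoked for Proposition~\ref{regirreg}.

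For (ii), I would use that $\kk((x))[D]$ is a domain, so that $0\to\kk((x))[D]\xrightarrow{\,\cdot(D-\alpha)^{n_\alpha}\,}\kk((x))[D]\to\operatorname{Loc}(\alpha,n_\alpha)\to0$ is a free resolution of the first argument; applying $\Hom_{\kk((x))[D]}(-,\operatorname{Loc}(\beta,n_\beta))$ then identifies $\Ext^0$ and $\Ext^1$ with the kernel and cokernel of the $\kk$-linear endomorphism of $\operatorname{Loc}(\beta,n_\beta)$ given by the action of $(D-\alpha)^{n_\alpha}$. The step I expect to be the main pitfall is recognizing that this action is a differential operator, not a scalar matrix: in the basis $e_i:=\overline{(D-\beta)^{i}}$, $0\le i<n_\beta$, one finds $D\cdot(\lambda e_i)=(x\partial_x\lambda+\beta\lambda)\,e_i+\lambda\,e_{i+1}$ (with $e_{n_\beta}=0$), i.e. $D$ acts as $x\partial_x\cdot\mathrm{id}+(\beta\,\mathrm{id}+N)$ with $N$ one nilpotent Jordan block; hence $(D-\alpha)^{n_\alpha}$ preserves the $x$-adic grading and acts on the coefficient of $x^{m}$ as $\big((m+\beta-\alpha)\,\mathrm{id}+N\big)^{n_\alpha}$. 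If $\alpha-\beta\notin\Z$, every $(m+\beta-\alpha)\,\mathrm{id}+N$ is invertible, so $(D-\alpha)^{n_\alpha}$ is bijective and both $\Ext^i$ vanish. If $\alpha-\beta\in\Z$, I would first reduce to $\alpha=\beta$ via the isomorphism $\operatorname{Loc}(\alpha,n)\cong\operatorname{Loc}(\alpha+k,n)$ given by right multiplication by $x^{k}$ (using $Dx^{k}=x^{k}(D+k)$); then only the coefficient $m=0$ is annihilated non-trivially, by $N^{n_\alpha}$ acting on $\kk^{n_\beta}$, whose kernel and cokernel each have dimension $n_\beta-\operatorname{rank}(N^{n_\alpha})=\min(n_\alpha,n_\beta)$.

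Parts (i) and (iii) then come out of (ii). From the case $\alpha=\beta$, $n_\alpha=n_\beta=m$, the endomorphism ring of $\operatorname{Loc}(\alpha,m)$ is $m$-dimensional over $\kk$ and local, so $\operatorname{Loc}(\alpha,m)$ is indecomposable; and the case $\alpha-\beta\notin\Z$ gives $\Hom(\operatorname{Loc}(\alpha,n),\operatorname{Loc}(\beta,n))=0$, so, together with the shift isomorphism above, the modules $\operatorname{Loc}(\alpha,m)$ ($\alpha$ modulo $\Z$, $m\ge1$) are pairwise non-isomorphic. That every regular $\kk((x))[D]$-module is a direct sum of such modules is the classical structure theory behind Proposition~\ref{regirreg}: decompose $\MM\otimes_{\OO_X}\kk((x))$ according to the classes modulo $\Z$ of its exponents, take Deligne's canonical lattice $\Lambda$ of each summand (so that $D\Lambda\subseteq\Lambda$ and $D$ acts on $\Lambda/x\Lambda$ as $\alpha\,\mathrm{id}+N$ with $N$ nilpotent), and put $N$ into Jordan form; this yields (i), and the uniqueness of $\alpha$ modulo $\Z$ in (iii) is exactly the non-isomorphism statement just established.

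Finally, for (iv): by (i) write $\MM\otimes_{\OO_X}\kk((x))\cong\bigoplus_j\operatorname{Loc}(\beta_j,m_j)$, so that $\Hom_{\DD_X}(\MM,\operatorname{Loc}(\alpha,1))=\bigoplus_j\Hom_{\kk((x))[D]}(\operatorname{Loc}(\beta_j,m_j),\operatorname{Loc}(\alpha,1))$; by the $\Ext^0$ case of (ii) the $j$-th summand has dimension $\min(m_j,1)=1$ when $\beta_j\equiv\alpha\pmod\Z$ and $0$ otherwise, so the total equals the number of indices $j$ with $\beta_j\equiv\alpha$, that is, the number of indecomposable summands of type $\operatorname{Loc}(\alpha,m)$. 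Thus the whole proposition rests on the $\Ext$-computation in (ii) — and within it on correctly handling $(D-\alpha)^{n_\alpha}$ as an $x$-graded differential operator — the rest being linear algebra over $\kk$ together with the lattice theory underlying \ref{regirreg}.
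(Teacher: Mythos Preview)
Your argument is correct. For part (ii) your approach and the paper's essentially coincide: both compute $\Ext^i$ as the kernel and cokernel of $(D-\alpha)^{n_\alpha}$ acting on $\operatorname{Loc}(\beta,n_\beta)$ and then normalize the exponents (the paper reduces to $\alpha=0$ by tensoring with the flat module $\operatorname{Loc}(-\alpha,1)$, you by the shift isomorphism given by right multiplication by $x^k$); your graded analysis of the operator is simply a more explicit version of what the paper leaves to the reader. The genuine difference lies in (i) and (iii). The paper first obtains a decomposition into indecomposables from the structure theorem for finitely generated torsion modules over the noncommutative principal ideal domain $\kk((x))[D]$ (Jacobson), and only afterwards identifies each indecomposable with some $\operatorname{Loc}(\alpha,n)$ by combining Proposition~\ref{regirreg} (composition factors) with the $\Ext$-computation of (ii). You instead invoke Deligne's canonical lattice $\Lambda$ and put the residue of $D$ on $\Lambda/x\Lambda$ into Jordan form, which produces the $\operatorname{Loc}(\alpha,n)$ decomposition directly. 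Both routes are standard; the PID argument is more self-contained and purely algebraic, while your lattice approach imports a bit more theory but makes the Jordan-block picture and the link to local monodromy transparent from the outset. Part (iv) is handled identically in both proofs.
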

\begin{proof}
When $\kk=\CC$, there is a topological proof as in \cite[Lem. 2.11.8]{KaESDE}. However, we can give a purely (linear) algebraic one.

Since $\MM$ is holonomic, it is a finitely generated torsion $\DD_X$-module, and so will $\MM\otimes\kk((x))$ be over $\kk((x))[D]$. This ring is a noncommutative principal ideal domain, so by the structure theorem for finitely generated modules over such a ring (cf. \cite[Ch. 3, Thm. 19]{Jacobson}) we obtain that $\MM\otimes\kk((x))$ is the direct sum of indecomposable $\kk((x))[D]$-modules. They must be regular since $\MM$ is, and that proves point (i).

Now let $\operatorname{Loc}(\alpha,n_\alpha)$ and $\operatorname{Loc}(\beta,n_\beta)$ be as in point (ii). We can suppose that both $\alpha$ and $\beta$ belong to the same fundamental domain (exhaustive set of representatives without repetitions) of $\kk/\Z$, up to isomorphism. Since $\operatorname{Loc}(\alpha,n_\alpha)$ is a flat $\kk((x))$-module, we can assume that $\alpha=0$. Now note that the vector spaces $\Ext_{\DD_X}^i(\operatorname{Loc}(\alpha,n_\alpha), \operatorname{Loc}(\beta,n_\beta))$ are just the kernel and the cokernel of $D^{n_\alpha}$ over $\operatorname{Loc}(\beta,n_\beta)$. In general, each of the $\operatorname{Loc}(\gamma,m)$, for any $\gamma\in\kk/\Z$ and any $m>0$, is a successive extension of $\operatorname{Loc}(\gamma,1)$. Thus if $\beta\neq0$, since the operator $D$ is clearly bijective on them, both $\Ext$ spaces vanish. If $\beta=0$, then the statement is easy to check.

Let us go now for point (iii). Thanks to the discussion of the first point, we can affirm that
$$\MM\otimes\kk((x))\cong\bigoplus_{i=1}^r\kk((x))[D]/(A_i(x,D)),$$
where $A_i(x,D)=\sum_{j\geq0}x^jA_{ij}(D)$. By Proposition \ref{regirreg},
$\kk((x))[D]/(A_i(x,D))$ is isomorphic to a successive extension of the $\operatorname{Loc}(\alpha_i,1)$, the $\alpha_i$ being the roots of each $A_{i0}$. Now we just need to invoke the previous point; such an extension must be a direct sum of some $\operatorname{Loc}(\beta,n_\beta)$, with, possibly, some $n_\beta>1$ if the roots of $A_{i0}$ were congruent modulo the integers.

Point (iv) is just an easy consequence of the two preceding ones.
\end{proof}

These two propositions show that the equivalence classes modulo $\Z$ of the numbers $\alpha$ appearing in the decomposition of the tensor product of a holonomic $\DD_X$-module with $\kk((x))$, and their associated $n_\alpha$, are intrinsic to the $\DD_X$-module and it is quite important, actually, to know its behaviour at a point, so that motivates the following definition.

\begin{defi}
Let $\MM$, $p$ and $x$ be as in Proposition \ref{regirreg}. The \emph{exponents} of $\MM$ at $p$ are the values $\alpha_i\in\kk$ such that
$$\left(\MM\otimes_{\OO_X}\kk((x))\right)_{\text{reg}}\cong \bigoplus_i\operatorname{Loc}(\alpha_i,n_i),$$
seen as elements of $\kk/\Z$. For each exponent $\alpha_i$ we define its \emph{multiplicity} as $n_i$.
\end{defi}

\begin{nota}
For the sake of simplicity, we will usually denote both an exponent and some of its representatives in $\kk$ in the same way.

Exponents are considered unordered and possibly repeated. Note that, when $\kk=\CC$, this notion of multiplicity of an exponent $\alpha$ is related to the size of the Jordan blocks of the local monodromy associated with the eigenvalue $e^{2\pi i\alpha}$, and not to its multiplicity as a root of the characteristic polynomial of the monodromy. However, these two notions are the same under some special conditions (cf. \cite[Cor. 3.2.2, Lem. 3.7.2]{KaESDE}). Nevertheless, in our algebraic setting, whenever we mention ``Jordan block'' we will mean a regular indecomposable $\operatorname{Loc}(\alpha,n_\alpha)$, in analogy with the complex analytic case.

Although, as we have seen, the exponents at the origin of a $\DD_{\AA^1}$-module can be defined even if it has an irregular singularity there, in the following we will deal with complexes of regular holonomic $\DD_{\AA^1}$-modules, since direct image preserves regularity (cf. \cite[Ch. II, Thm. 9.3.1]{Meb6oper}).
\end{nota}

\section{Gauss-Manin systems, main result and Laurent series}\label{centro}

Let us recall now the basic setting of one-dimensional Gauss-Manin systems, seen from the point of view of $\DD$-module theory.

Fix a positive integer $n$, some variables $x_1,\ldots,x_n$ and a special one called $\lambda$. Consider an open set $U\subseteq\AA^n=\Spec\left(\kk[x_1,\ldots,x_n]\right)$ and a smooth variety $X\subset U\times \AA^1=U \times\Spec(\kk[\lambda])$, together with the second projection $\pi_2:X\ra \AA^1$. In terms of $\DD$-modules, the Gauss-Manin cohomology, or system, of $X$, is just the direct image of the structure sheaf $\pi_{2,+}\OO_{X}$. It is a complex of $\DD_{\AA^1}$-modules, so we could be interested in knowing its behaviour at the origin, and in particular its exponents. In this paper we will focus on the case where $X$ is a hypersurface.

Going back to Theorem \ref{Koszul}, the direct image $f_+\OO_{\AA^n-G}$ can be seen as the Gauss-Manin cohomology of the graph of $f$ in $(\AA^n-G)\times \AA^1$. However, that is a rather concrete and simple example of a family of hypersurfaces. We will explain after proving our main result how to relate this setting to a broader family of Gauss-Manin systems.

\noindent\textit{Proof of Theorem \ref{Koszul}.} Let us first deal with the zeroth cohomology of $f_+\OO_{\AA^n-G}$. After finishing with it we will justify the extension of the statement to all of them.

Let $K=f_+\OO_{\AA^n-G}$. By Proposition \ref{Jordan} we can claim that
$$\HH^0(K)\otimes_{\OO_{\AA^1}}\kk((t))\cong \bigoplus_{i=1}^r \kk((t))[D]/(D-\beta_i)^{m_i}.$$
From the second paragraph of the same proposition we can deduce that $\alpha$ will not be an exponent of $\HH^0(K)$ if and only if the endomorphism
$$D-\alpha\cdot:\HH^0(K)\otimes_{\OO_{\AA^1}}\kk((t)) \lra\HH^0(K)\otimes_{\OO_{\AA^1}}\kk((t))$$
is bijective, or even surjective, for the
$$\operatorname{Ext}_{\kk((t))[D]}^i \left(\kk((t))[D]/(D-\alpha),\kk((t))[D]/(D-\beta)^k\right)$$
do not vanish or do vanish at the same time, whenever $\alpha$ is or is not congruent to $\beta$ modulo $\Z$, respectively, for $i=0,1$ and any $k$.

Now let us decompose the morphism $f$ as the closed immersion into its graph $i_{\Gamma}$ followed by the projection $\pi$ on the first coordinates. By Kashiwara's equivalence and the properties of local cohomology (cf. \cite[Thm. 1.6.1, Prop. 1.7.1]{HTT}), the complex $i_{\Gamma,+}\OO_{\AA^n-G}$ is concentrated in degree zero and, furthermore, is $\OO_{(\AA^n-G)\times \AA^1}(*\Gamma) /\OO_{(\AA^n-G)\times \AA^1}$, for the graph of $f$ is smooth in $\AA^{n+1}$. Therefore, the fact that $\alpha$ is not an exponent of $\HH^0(K)$ is equivalent to the surjectivity of $D-\alpha$ on
$$\pi_+\OO_{(\AA^n-G)\times \AA^1}(*\Gamma) /\OO_{(\AA^n-G)\times \AA^1}\otimes_{\OO_{\AA^1}}\kk((t)).$$

Note that we are always dealing with affine morphisms and quasi-coherent $\OO_{(\AA^n-G)\times \AA^1}$-modules and we are taking tensor products with $\kk((t))$, so it suffices (cf., for example, \cite[Prop. 1.4.4]{HTT}) to work from now on with the global sections of the objects involved in the proof.

Write $M=\kk[\xx,g^{-1},t]\left[(t-f)^{-1}\right]/\kk[\xx,g^{-1},t]$. Recall that we are interested in the top cohomology of $\DR_{\xx}(M)$. Since $\kk((t))$ is flat over $\kk[t]$, tensor products with the former over the latter commute with cohomology, and thus we are going to deal with
$$M_{\text{loc}}:=\kk((t))[\xx,g^{-1}]\left[(t-f)^{-1}\right]/ \kk((t))[\xx,g^{-1}],$$
which is a module over $R=\kk((t))[x_1,\ldots,x_n,g^{-1}]$ and $\widehat{\DD}:=R\langle\dd_t,\dd_1,\ldots,\dd_n\rangle$.

Let us introduce just a bit more of notation that we are going to use. We will write
$$\DD_t:=\kk((t))\langle\dd_t\rangle,\:\DD_{\xx}:=\kk[\xx,g^{-1}]\langle\dd_1,\ldots,\dd_n\rangle\,\text{ and }\,\widehat\DD_{\xx}:=\kk((t))[\xx,g^{-1}]\langle\dd_1,\ldots,\dd_n\rangle.$$

Summing everything up, $\alpha\!\mod\Z$ is not an exponent of the $\DD_{\AA^1}$-module $\HH^0K$ at the origin if and only if
$$\RR^1\Hom_{\DD_t}\left(\DD_t/(D-\alpha),\RR^n\Hom_{\widehat\DD_{\xx}} \left(R,M_{\text{loc}}\right)\right)=0.$$
Note that
$$R\cong\kk((t))\otimes_\kk\kk[\xx,g^{-1}]\cong\kk((t))\otimes_\kk \DD_{\xx}\otimes_{\DD_{\xx}}\kk[\xx,g^{-1}]\cong \widehat\DD_{\xx}\otimes_{\DD_{\xx}}\kk[\xx,g^{-1}],$$
so by extension of scalars,
$$\RR^n\Hom_{\widehat\DD_{\xx}} \left(R,M_{\text{loc}}\right)\cong \RR^n\Hom_{\DD_{\xx}} \left(\kk[\xx,g^{-1}],M_{\text{loc}}\right).$$
Now applying the derived tensor-hom adjunction,
$$\RR^1\Hom_{\DD_t}\left(\DD_t/(D-\alpha),\RR^n\Hom_{\DD_{\xx}} \left(\kk[\xx,g^{-1}],M_{\text{loc}}\right)\right)$$
$$\cong\RR^{n+1}\Hom_{\DD_{\xx}}\left(\DD_t/(D-\alpha) \boxtimes\kk[\xx,g^{-1}] ,M_{\text{loc}} \right)$$
$$\cong\RR^{n+1}\Hom_{\widehat\DD}\left(\widehat\DD/(D-\alpha,\dd_1, \ldots,\dd_n),M_{\text{loc}}\right),$$
the last isomorphism being by extension of scalars again. (Note that the first isomorphism is of $\kk$-vector spaces.)

Now $M_{\text{loc}}$ is a self-dual $\widehat\DD$-module, being the direct image by a closed immersion of the self-dual object $\kk[\xx,g^{-1}]$ (cf. \cite[Ch. I, Cor. 5.3.13]{Meb6oper}), so by duality $\alpha$ is not an exponent of the $\DD_{\AA^1}$-module $\HH^0K$ at the origin if and only if
$$\RR^{n+1}\Hom_{\widehat\DD}\left(M_{\text{loc}}, \widehat\DD/(D+1+\alpha,\dd_1, \ldots,\dd_n)\right)=0.$$
The second $\widehat\DD$-module above is nothing but $R\cdot t^{-1-\alpha}$, where $t^{-1-\alpha}$ should be understood as a symbol. The actions of the partial derivatives are the usual ones in $R$ of $\dd_1,\ldots,\dd_n$, and regarding $\dd_t$,
$$\dd_t\left(a\cdot t^{-1-\alpha}\right)=\dd_t(a)\cdot t^{-1-\alpha}+(-1-\alpha)t^{-1}a\cdot t^{-1-\alpha}.$$
In order to finish all this construction, take into account that the annihilator of the class of $(t-f)^{-1}$ in $M_{\text{loc}}$ is the left ideal $(f-t,\dd_1+f'_1\dd_t,\ldots, \dd_n+f'_n\dd_t)$; indeed, each of its generators make it vanish and the ideal is maximal. Therefore, $M_{\text{loc}}$ can be presented as
$$M_{\text{loc}}\cong\widehat\DD/(f-t,\dd_1+f'_1\dd_t,\ldots, \dd_n+f'_n\dd_t).$$
Recall that the exponents are equivalence classes in $\kk/\Z$, so we could replace $\alpha$ by $\alpha-1$ in this whole procedure, taking another representative of the same exponent without affecting the validity of the proof. Then we can claim that $\alpha$ is not an exponent of the $\DD_{\AA^1}$-module $\HH^0f_+\OO_{\AA^n}$ at the origin if and only if the $\kk$-linear homomorphism $\Phi: R^{n+1}\lra R$ given by
$$\Phi=\left(f-t,\dd_1+f'_1\fia,\ldots,\dd_n+f'_n\fia\right)$$
is surjective.

The statement on the dimension of the cokernel follows easily by reversing the isomorphisms and equivalences and applying point (ii) of Proposition \ref{Jordan}.

Note that the operators $f-t,\dd_1+f'_1\fia,\ldots,\dd_n+f'_n\fia$ commute pairwise, so the Koszul complex $K^\bullet(R;f-t,\dd_1+f'_1\fia,\ldots, \dd_n+f'_n\fia)$ is well defined. Moreover, thanks to the same choice of the representative of $\alpha\!\mod\Z$ as two paragraphs above we can see that its cohomologies are just the vector spaces
$$\RR^k\Hom_{\widehat\DD}\left(M_{\text{loc}}, \widehat\DD/(D+1+\alpha,\dd_1, \ldots,\dd_n)\right),$$
whose duals are in turn an extension of the
\begin{equation}\label{prueba}
\RR^i\Hom_{\DD_t}\left(\DD_t/(D-\alpha),\RR^j\Hom_{\widehat\DD_{\xx}} \left(R,M_{\text{loc}}\right)\right)
\end{equation}
with $j=k$ and $j=k-1$.

Now we claim that the surjectivity of $\Phi$ is equivalent to the vanishing of all the cohomologies of such a Koszul complex. One implication is trivial; the other is \cite[\S~9, Cor. 1]{Bourbaki}. Then, $\Phi$ is surjective if and only if all of the vector spaces (\ref{prueba}) vanish, which, following an argument analogous to the case of the zeroth cohomology $\HH^0K$, is equivalent to the fact that $\alpha$ is not an exponent of any of the cohomologies of $f_+\OO_{\AA^n-G}$. This ends the proof of the theorem.
\hfill$\square$

We might have that some $\alpha$ is an exponent at the origin of some cohomologies of $f_+\OO_{\AA^n-G}$. In that case, reviewing the final argument of the proof, one can still have a partial result when dealing with the vanishing of the cohomologies of the whole Koszul complex. More concretely, we have the following:

\begin{coro}
Under the same conditions as before, were the Koszul complex $K^\bullet(R;f-t,\dd_1+f'_1\fia,\ldots, \dd_n+f'_n\fia)$ acyclic in degrees $d_0$ to $d_1$ (possibly equal to 0 or $n+1$, respectively), then $\alpha\!\mod\Z$ is not an exponent at the origin of any of the cohomologies $\HH^kf_+\OO_{\AA^n-G}$ for $d_0-1\leq k+n\leq d_1$.
\end{coro}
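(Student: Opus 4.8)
The plan is to mimic the proof of Theorem \ref{Koszul} but to track the whole Koszul complex rather than just its top cohomology. Recall that in that proof we identified, for each $\alpha\in\kk$, the complex $\RR\Hom_{\widehat\DD}\bigl(\widehat\DD/(D-\alpha,\dd_1,\ldots,\dd_n),M_{\text{loc}}\bigr)$ with $\RR\Hom_{\DD_t}\bigl(\DD_t/(D-\alpha),\RR^n\Hom_{\widehat\DD_{\xx}}(R,M_{\text{loc}})\bigr)$, and, after dualizing and using the presentation $M_{\text{loc}}\cong\widehat\DD/(f-t,\dd_1+f'_1\dd_t,\ldots,\dd_n+f'_n\dd_t)$, with the Koszul complex $K^\bullet(R;f-t,\dd_1+f'_1\fia,\ldots,\dd_n+f'_n\fia)$ (after the harmless shift $\alpha\mapsto\alpha-1$). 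So the object whose cohomology in degree $j$ governs the vanishing of $\alpha$ as an exponent is exactly $H^j$ of this Koszul complex.

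The key computation is a double-complex/spectral-sequence bookkeeping. First I would note that $\RR^n\Hom_{\widehat\DD_{\xx}}(R,M_{\text{loc}})$ is concentrated in a single degree: $R\cong\kk[\xx,g^{-1}]$ is $\OO$-locally free, $\kk[\xx,g^{-1}]$ has a length-$n$ resolution by free $\DD_{\xx}$-modules (the relative Spencer/Koszul resolution), and the hom-complex computing $\RR\Hom_{\widehat\DD_{\xx}}(R,M_{\text{loc}})$ is precisely the relative de Rham complex $\DR_{\xx}(M_{\text{loc}})$ placed in degrees $0,\ldots,n$. Hence the $i$-th cohomology of $\RR\Hom_{\widehat\DD_{\xx}}(R,M_{\text{loc}})$ is $\HH^{i}\DR_{\xx}(M_{\text{loc}})$, which after the shift by $\dim(\AA^n-G)=n$ inherent in $\pi_+$ equals $\HH^{i-n}\bigl(f_+\OO_{\AA^n-G}\bigr)\otimes_{\OO_{\AA^1}}\kk((t))$. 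Then applying $\RR\Hom_{\DD_t}(\DD_t/(D-\alpha),-)$ to this complex yields a spectral sequence whose $E_2$ page, by Proposition \ref{Jordan}(ii), has in column $i$ and row $0$ resp. $1$ the kernel resp.\ cokernel of $D-\alpha$ acting on $\HH^{i-n}(K)\otimes\kk((t))$, these being zero whenever $\alpha\not\equiv\beta_\ell\bmod\Z$ for all composition factors. The upshot is a collapsing argument: if the Koszul complex is acyclic in degrees $d_0,\ldots,d_1$, then $\RR^j\Hom_{\widehat\DD}\bigl(\widehat\DD/(D-\alpha,\dd_1,\ldots,\dd_n),M_{\text{loc}}\bigr)=0$ for those $j$, and unwinding forces the relevant $\Ext$-terms — hence the monodromy contribution of $\alpha$ to $\HH^{k}(f_+\OO_{\AA^n-G})$ — to vanish for every $k$ with $j=k+n$ in the range $d_0-1\le k+n\le d_1$ (the slack of one coming from the two possibly-nonzero rows of the spectral sequence).

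Concretely, the cleanest route is probably to avoid spectral sequences and argue as in the original proof: dualizing, the self-duality of $M_{\text{loc}}$ gives $\RR^j\Hom_{\widehat\DD}(\widehat\DD/(D-\alpha,\dd_1,\ldots,\dd_n),M_{\text{loc}})\cong\RR^{n+1-j}\Hom_{\widehat\DD}(M_{\text{loc}},R\cdot t^{-1-\alpha})^\vee$ up to the usual shift, and the latter is $H^{n+1-j}$ of the Koszul complex on $R$ by the presentation of $M_{\text{loc}}$; so acyclicity of the Koszul complex in the stated degrees directly kills these Ext-groups. On the other side, the adjunction identity from the main proof reads $\RR^j\Hom_{\widehat\DD}(\widehat\DD/(D-\alpha,\dd_1,\ldots,\dd_n),M_{\text{loc}})\cong\RR^{j-n}\Hom_{\DD_t}\bigl(\DD_t/(D-\alpha),\HH^{\bullet}\DR_{\xx}(M_{\text{loc}})\bigr)$, and since the inner graded object is $\bigoplus_k \HH^{k}(K)\otimes\kk((t))$ placed in degree $k+n$, one reads off that the vanishing of these Ext's over a range of $j$ implies the vanishing of $\Hom$ and $\Ext^1$ from $\DD_t/(D-\alpha)$ into $\HH^{k}(K)\otimes\kk((t))$ exactly for $d_0-1\le k+n\le d_1$, which by Proposition \ref{Jordan}(iv) and its $\Ext^1$-analogue is the assertion that $\alpha\bmod\Z$ is not an exponent of $\HH^k f_+\OO_{\AA^n-G}$ at the origin.

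The main obstacle I anticipate is pinning down the index bookkeeping precisely: the $\pi_+[\dim]$ shift, the cohomological degree of the de Rham complex $\DR_{\xx}$, the self-duality degree $n+1$ of $M_{\text{loc}}$ over $\widehat\DD$, and the fact that a single degree of the Koszul complex sees both $\Hom$ and $\Ext^1$ into the (possibly nonsemisimple) module $\HH^k(K)\otimes\kk((t))$ — this last point is exactly why one only gets acyclicity in degrees $d_0$ through $d_1$ to conclude non-exponent-ness in the slightly widened range $d_0-1\le k+n\le d_1$, and getting that offset right is the delicate bit. Everything else is a formal consequence of Proposition \ref{Jordan}, the flatness of $\kk((t))$ over $\kk[t]$, and \cite[9.1, Corollaire 1]{Bo} applied degreewise; once the indices are aligned the corollary drops out.
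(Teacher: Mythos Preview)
Your approach is essentially the paper's own: identify the Koszul cohomology in degree $k$ with $\RR^k\Hom_{\widehat\DD}\bigl(M_{\text{loc}},\widehat\DD/(D+1+\alpha,\dd_1,\ldots,\dd_n)\bigr)$, reverse the isomorphisms of the main proof, and use that this sits in a short exact sequence (your two-row $E_2$ page) with pieces $\RR^i\Hom_{\DD_t}\bigl(\DD_t/(D-\alpha),\RR^j\Hom_{\widehat\DD_{\xx}}(R,M_{\text{loc}})\bigr)$ for $j=k,k-1$; vanishing of the middle term forces both pieces to vanish, giving the range $d_0-1\le j\le d_1$ exactly as you describe. One caution: in your ``concrete route'' paragraph the duality step should read $\RR^j\Hom_{\widehat\DD}\bigl(\widehat\DD/(D-\alpha,\ldots),M_{\text{loc}}\bigr)\cong \RR^j\Hom_{\widehat\DD}\bigl(M_{\text{loc}},R\cdot t^{-1-\alpha}\bigr)$ with the \emph{same} index $j$ and no $(\,)^\vee$ (this is functorial holonomic duality $\mathbb{D}$, not Serre-type duality), which is how the paper matches Koszul degree $k$ directly to Ext degree $k$; your $n+1-j$ swap would misalign the final range, so stick with your second-paragraph spectral-sequence bookkeeping.
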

\begin{proof}
As we noted at the end of the proof of the theorem, if $K^\bullet(R;f-t,\dd_1+f'_1\fia,\ldots, \dd_n+f'_n\fia)$ is acyclic in degree $k$, then
$$\RR^{k}\Hom_{\widehat\DD}\left(M_{\text{loc}}, \widehat\DD/(D+1+\alpha,\dd_1, \ldots,\dd_n)\right)=0,$$
whose dual is the extension of the
$$\RR^i\Hom_{\DD_t}\left(\DD_t/(D-\alpha),\RR^j\Hom_{\widehat\DD_{\xx}} \left(R,M_{\text{loc}}\right)\right)$$
with $j=k$ and $j=k-1$. As a consequence, for every $i$ and $j$ with $d_0-1\leq j\leq d_1$ such an object must vanish, and in conclusion, the endomorphism $$D-\alpha:\HH^j(K)\otimes\kk((t))\lra\HH^j(K)\otimes\kk((t))$$
is surjective for $d_0-1\leq j+n\leq d_1$, so $\alpha\!\mod\Z$ is not an exponent at the origin of any of the cohomologies $\HH^jf_+\OO_{\AA^n-G}$ for such values of $j$.
\end{proof}

The following corollary, despite being an easy consequence of both the theorem and the previous corollary, seems to us interesting enough to be written explicitly:

\begin{coro}
Using the same notation as in the theorem, if $\alpha\!\mod\Z$ is an exponent at the origin of some cohomology $\HH^if_+\OO_{\AA^n-G}$ with $i<0$, then it is also an exponent of $\HH^0f_+\OO_{\AA^n-G}$.
\end{coro}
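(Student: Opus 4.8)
The plan is to reduce the statement to the Koszul-complex description obtained in the proof of Theorem \ref{Koszul} and to play off the two corollaries against each other. Suppose that $\alpha \!\!\mod \Z$ is an exponent at the origin of $\HH^i f_+ \OO_{\AA^n-G}$ for some $i<0$. Since $f_+\OO_{\AA^n-G} = \pi_+ M_{\text{loc}}[\text{shift}]$ with $\pi$ an affine projection of relative dimension $n$, the cohomologies $\HH^k f_+\OO_{\AA^n-G}$ vanish unless $-n \le k \le 0$; so in particular $-n \le i \le -1$. Translating to the Koszul complex $K^\bullet := K^\bullet(R; f-t, \dd_1+f'_1\fia, \ldots, \dd_n+f'_n\fia)$ as in the proof, the hypothesis says that $K^\bullet$ fails to be acyclic in degree $i+n$, which lies between $0$ and $n-1$; equivalently $\RR^{i+n}\Hom_{\widehat\DD}(M_{\text{loc}}, \widehat\DD/(D+1+\alpha, \dd_1,\ldots,\dd_n))\neq 0$.

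Next I would invoke the contrapositive of the Corollary on acyclicity ranges. If $\alpha \!\!\mod\Z$ were \emph{not} an exponent of $\HH^0 f_+\OO_{\AA^n-G}$, then by Theorem \ref{Koszul} the map $\Phi$ is surjective, i.e. $K^\bullet$ is acyclic in degree $n+1$ (the top degree). The point is to bootstrap from acyclicity in the top degree down to acyclicity in all lower degrees, using self-duality of $M_{\text{loc}}$. Indeed, the Koszul complex $K^\bullet$ computes $\RR\Hom_{\widehat\DD}(M_{\text{loc}}, R\cdot t^{-1-\alpha})$, and by the self-duality argument already used in the proof of the theorem, this is identified (up to the appropriate shift and a change of representative of $\alpha$) with $\RR\Hom_{\widehat\DD}(\widehat\DD/(D-\alpha,\dd_1,\ldots,\dd_n), M_{\text{loc}})$, which in turn is computed by the analogous Koszul complex $K^\bullet(R; f-t, \dd_1 + f'_1\fii_{\alpha}^{\,*}, \ldots)$ built from the \emph{transposed} (adjoint) operators. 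But $\Phi$ surjective means precisely that this dual Koszul complex is acyclic at its \emph{bottom} degree $0$; and acyclicity of a Koszul complex on a commuting family at degree $0$ forces acyclicity at all positive degrees as well, because the complex is then the image under a single operation of an acyclic situation — more concretely, once one generator acts regularly one can peel it off and induct on $n$. Hence $K^\bullet$ is acyclic in every degree, contradicting the nonvanishing at degree $i+n$.

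More carefully, the inductive step I have in mind is: write $K^\bullet$ as the total complex of the cone of multiplication by $f-t$ on $K^\bullet(R; \dd_1+f'_1\fia, \ldots, \dd_n+f'_n\fia)$. Surjectivity of $\Phi$ says the cokernel of $f-t$ on the $H^n$ of the latter (an $n$-variable Koszul complex) vanishes; but over $R = \kk((t))[\xx, g^{-1}]$ the element $f-t$ is not a zero divisor, so in fact multiplication by $f-t$ is injective as well on every term, and a short computation shows the whole cone is acyclic once its top cohomology is. This is the step I expect to be the main obstacle: one must check that $f-t$ acts without cohomology in all degrees, not just at the top, which requires either the explicit structure of $M_{\text{loc}}$ as a free $\kk((t))[(t-f)^{-1}]$-ish module or a clean Euler-characteristic/rank count. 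Once that is in hand, the two corollaries combine to give the statement: if $\alpha$ is an exponent of some negative cohomology, the Koszul complex is not everywhere acyclic, so by the above it cannot be acyclic at the top, so $\Phi$ is not surjective, so $\alpha$ is an exponent of $\HH^0 f_+\OO_{\AA^n-G}$, which is exactly what we wanted. An alternative, possibly cleaner route is to bypass the induction and argue directly from \cite[9.1, Corollaire 1]{Bo}, which relates the depth-type vanishing of a Koszul complex across its degrees; under our hypotheses the nonvanishing at an interior degree propagates to the top degree, again yielding non-surjectivity of $\Phi$.
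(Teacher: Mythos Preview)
Your final ``alternative'' is precisely the paper's proof, and it is the whole proof: assume by contraposition that $\alpha\!\!\mod\Z$ is not an exponent of $\HH^0 f_+\OO_{\AA^n-G}$; then by Theorem~\ref{Koszul} the map $\Phi$ is surjective, i.e.\ the top cohomology of the Koszul complex $K^\bullet(R;f-t,\dd_1+f'_1\fia,\ldots,\dd_n+f'_n\fia)$ vanishes; by \cite[9.1, Corollaire~1]{Bo} this forces \emph{all} cohomologies of the Koszul complex to vanish; and then the preceding corollary gives that $\alpha\!\!\mod\Z$ is not an exponent of any $\HH^i f_+\OO_{\AA^n-G}$. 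That is three lines.

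The bulk of your proposal---the self-duality detour and the mapping-cone induction on $f-t$---is unnecessary, and the duality step as written is not quite coherent. You say that $\Phi$ surjective means the \emph{dual} Koszul complex is acyclic at its bottom degree~$0$, and that ``acyclicity at degree~$0$ forces acyclicity at all positive degrees''. But acyclicity at the bottom of a cohomological Koszul complex is an \emph{injectivity} statement about the first differential, and that does not propagate upward in general; what does propagate is vanishing of the \emph{top} (co)homology, which is exactly the content of the Bourbaki corollary you cite at the end. In other words, the duality pass does not buy you anything: you still need the same Bourbaki input, now applied to the dual complex, so you may as well apply it directly to the original one. Likewise, the cone/induction argument you sketch, with its ``main obstacle'' about $f-t$ acting without cohomology in all degrees, is a reinvention of the Bourbaki result and is strictly harder than just quoting it.

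So: keep the contrapositive skeleton and the Bourbaki citation, drop the rest.
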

\begin{proof}
If $\alpha\!\mod\Z$ were not an exponent of $\HH^0f_+\OO_{\AA^n-G}$, then its associated morphism $\Phi$ as in the theorem would be surjective, so as in the end of the proof of our main result, by \cite[\S9, Cor. 1]{Bourbaki} we could claim that every cohomology of the Koszul complex $K^\bullet(R;f-t,\dd_1+f'_1\fia,\ldots, \dd_n+f'_n\fia)$ would vanish. Then by the corollary above, $\alpha\!\mod\Z$ would not be an exponent at the origin of any of the other cohomologies of $f_+\OO_{\AA^n-G}$.
\end{proof}

We return now to the context of Gauss-Manin systems in the form of the following statement:

\begin{prop}
Keep the notation as at the beginning of this section. Let $r(\xx)$ be a polynomial of $\kk[\xx]$, and let $U$ be the basic open set $\{r(\xx)\neq0\}$. Assume there exist two regular functions on $U$, $p(\xx)$ and $q(\xx)$, such that $X$ is defined by the equation $p(\xx)-\lambda^dq(\xx)=0$, for certain $d>0$. Write $f=p/q$, $q=\tilde{q}/r^m$ and $G=\{r\tilde{q}=0\}$. Then, the noninteger exponents and the integer ones with multiplicity greater than one of the Gauss-Manin system $\pi_{2,+}\OO_X$ are $d$ times those of $f_+\OO_{\AA^n-G}$.
\end{prop}
\begin{proof}
Let us show how to reduce ourselves to consider $d=1$. Indeed, form the Cartesian diagram
$$\xymatrix{\ar @{} [dr] |{\Box} X \ar[d]_{\pi_2} \ar[r]^{\id\times[d]} & \tilde{X} \ar[d]^{\pi_2}\\
 \AA^1 \ar[r]^{[d]} & \AA^1,}$$
where $[d]$ just means taking the $d$th power of the argument. It is easy to check that if $X$ is smooth, so is $\tilde{X}$. Therefore, by the base change formula \cite[Thm. 1.7.3]{HTT}, $\pi_{2,+}\OO_X\cong[d]^+\pi_{2,+}\OO_{\tilde{X}}$, so as written above, we could find the exponents of the Gauss-Manin cohomology of $X$ by finding those of $\tilde{X}$; the former will just be $d$ times the latter.

Rename, for the sake of clarity, $\tilde{X}$ as $X$, assuming that $d=1$ throughout the rest of the proof. Write both $p$ and $q$ as fractions with the same denominator $\bar{p}/r^N$ and $\bar{q}/r^N$, respectively. Then $X$ is the vanishing locus of $\bar{p}(\xx)-\lambda\bar{q}(\xx)$ in $U\times \AA^1$. Let $Z$ be the hypersurface of $X$ with equation $\bar{q}(\xx)=0$. Then $Z$ is contained in $X$ and is defined by $\{\bar{p}=\bar{q}=0\}$ in the whole of $U$, so it is the product of a subvariety $Z'\subset U$ with $\AA^1$.

Now we can form the excision triangle (cf. \cite[Ch. I, \S6.1]{Meb6oper})
$$\RR\Gamma_{[Z]}\OO_X\lra\OO_X\lra\OO_X(*Z).$$
Let us see what happens when we apply $\pi_{2,+}$ to the triangle. Let $i$ be the closed immersion $X\ra U\times \AA^1$. Thanks to \cite[Ch. I, Prop. 6.4.1]{Meb6oper}, we can affirm that $i_+\RR\Gamma_{[Z]}\OO_X\cong \RR\Gamma_{[Z]}(i_+\OO_X)\cong\RR\Gamma_{[Z]}\RR\Gamma_{[X]}\OO_{U\times \AA^1}[1]$, for $X$ is smooth in $U$. But $\RR\Gamma_{[Z]}\RR\Gamma_{[X]}\OO_{U\times \AA^1}\cong\RR\Gamma_{[Z]}\OO_{U\times \AA^1}$ because $Z$ is contained in $X$ (cf. \cite[Ch. I, Prop. 6.2.4]{Meb6oper} and beware the typo). Now the latter local cohomology module is nothing but $\pi_1^+\RR\Gamma_{[Z']}\OO_U$, where $\pi_1$ is the first projection $U\times \AA^1\ra U$, by \cite[Ch. I, Prop. 6.3.1]{Meb6oper}. Thus abusing a bit of the notation so that $\pi_2$ represents two different projections onto $\AA^1$,
$$\pi_{2,+}\RR\Gamma_{[Z]}\OO_X\cong \pi_{2,+}\pi_1^+\RR\Gamma_{[Z']}\OO_U.$$
And now it is easy to see that this complex has only copies of $\OO_{\AA^1}$ among its cohomologies; it is simply a consequence of applying the base change formula to the Cartesian square
$$\xymatrix{\ar @{} [dr] |{\Box} U\times \AA^1 \ar[d]_{\pi_1} \ar[r]^{\pi_2} & \AA^1 \ar[d]^{\pi_{\AA^1}}\\
 U \ar[r]^{\pi_U} & \{*\},}$$
where $\pi_U$ and $\pi_{\AA^1}$ are the projections from the variety in the subscript to a point. In conclusion, we can claim that $\pi_{2,+}\RR\Gamma_{[Z]}\OO_X$ is just a bunch of copies of the structure sheaf $\OO_{\AA^1}$.

Then apart from the purely constant part, the information about the exponents of $\pi_{2,+}\OO_X$ can be found within $\pi_{2,+}\OO_X(*Z)$. But $X-Z$ can be seen as the graph of $\bar{p}/\bar{q}=p/q$ in $U\times\AA^1$, so now this complex can be realized in the form of our Theorem \ref{Koszul} just by taking $g=r\bar{q}$ and $f=\bar{p}/\bar{q}$.
\end{proof}

\begin{nota}
We have provided in the end a way of computing the noninteger exponents of $\pi_{2,+}\OO_X$. In fact, we could have thought of a slightly broader family of Gauss-Manin systems, namely, those associated with a family of the form $X=\{p(\xx)-\gamma(\lambda)q(\xx)=0\}\subset U\times \AA^1$, for some polynomial $\gamma\in\kk[\lambda]$. The reduction of the beginning of the proof by base change would still be possible, but the calculation of $\gamma^+\pi_{2,+}\OO_{\tilde{X}}$ would not at all be as direct as with $\gamma(\lambda)=\lambda^d$.
\end{nota}

We finish this section by providing several results or notions regarding the field of formal Laurent series that will be of interest later when we tackle a particular example.

\begin{lema}\label{deforgrado}
Let $\fii:\kk((t))\lra\kk((t))$ be a $\kk$-linear automorphism of $\kk((t))$ such that $\fii(\kk[[t]]\cdot t^k)=\kk[[t]]\cdot t^k$ for every $k\in\Z$. Then, for any $\kk$-linear endomorphism $\psi$ of $\kk((t))$ such that $\psi(\kk[[t]]\cdot t^k)\subseteq\kk[[t]]\cdot t^{k+1}$, the sum $\fii+\psi$ is another automorphism of $\kk((t))$.
\end{lema}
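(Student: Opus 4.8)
The plan is to reduce the statement to a routine check on the "leading-coefficient" map, using the filtration of $\kk((t))$ by the subspaces $V_k:=\kk[[t]]\cdot t^k$, which form a decreasing exhaustive and separated filtration whose successive quotients $V_k/V_{k+1}$ are one-dimensional over $\kk$. First I would record the three structural facts I need: $\fii$ preserves each $V_k$ (hence induces an automorphism of each $V_k/V_{k+1}$, i.e. multiplication by a nonzero scalar $c_k\in\kk^{\times}$), $\psi$ raises the filtration by one step (so $\psi(V_k)\subseteq V_{k+1}$, and in particular $\psi$ induces the zero map on each $V_k/V_{k+1}$), and consequently $\fii+\psi$ also preserves each $V_k$ and induces the same multiplication-by-$c_k$ map on $V_k/V_{k+1}$ as $\fii$ does. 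From the last observation, $\fii+\psi$ is a filtered $\kk$-linear endomorphism whose associated graded map is an isomorphism.

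The key step is then to prove a small lemma: a $\kk$-linear endomorphism $\theta$ of $\kk((t))$ that preserves the filtration $\{V_k\}$ and whose associated graded $\operatorname{gr}\theta$ is bijective on each $V_k/V_{k+1}$ is itself bijective. For injectivity, suppose $\theta(v)=0$ with $v\neq0$; since the filtration is separated and exhaustive, $v$ lies in some $V_k\setminus V_{k+1}$, and then the class of $v$ in $V_k/V_{k+1}$ is nonzero and maps to $0$ under $\operatorname{gr}_k\theta$, a contradiction. For surjectivity I would argue by successive approximation: given $w\in V_k$, using bijectivity of $\operatorname{gr}_k\theta$ choose $v_0\in V_k$ with $w-\theta(v_0)\in V_{k+1}$, then correct by $v_1\in V_{k+1}$, etc.; the partial sums $v_0+v_1+\cdots$ form a Cauchy sequence for the $t$-adic topology, which converges in $\kk((t))$ because $\kk((t))=\bigcup_k V_k$ is $t$-adically complete on each $V_k$, and continuity of $\theta$ (it is filtered) gives $\theta(\sum v_i)=w$. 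Since $\bigcup_k V_k=\kk((t))$, $\theta$ is onto.

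Applying this lemma to $\theta=\fii+\psi$ finishes the proof. The main obstacle is purely a matter of being careful about the topology: one must make sure that $\fii+\psi$ is genuinely continuous for the $t$-adic topology so that the infinite sum produced in the surjectivity argument is actually carried to $w$, and that the Cauchy sequence really does converge inside $\kk((t))$ rather than only in the completion $\kk[[t]]$-side — this is fine because each element we build eventually stabilizes below a fixed $V_k$, but it is the point that needs a clean statement. Everything else (the scalars $c_k$, the vanishing of $\operatorname{gr}\psi$, separatedness of the filtration) is immediate from the hypotheses. An alternative, slightly slicker route would be to note that $\fii+\psi=\fii\circ(\id+\fii^{-1}\psi)$ and that $\fii^{-1}\psi$ is still a filtration-raising operator, so it suffices to invert $\id+\eta$ for $\eta$ topologically nilpotent via the geometric series $\sum_{j\geq0}(-\eta)^j$, which converges pointwise on $\kk((t))$ since $\eta^j(V_k)\subseteq V_{k+j}$; I would likely present this version, as it avoids the successive-approximation bookkeeping entirely.
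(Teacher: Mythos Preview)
Your proposal is correct and follows essentially the same line as the paper: the paper also begins by composing with $\fii^{-1}$ to reduce to $\fii=\id$ (your ``alternative route''), and then builds a preimage of a given $b$ by successive approximation along the filtration $V_k=\kk[[t]]\cdot t^k$, reading off one coefficient at a time---exactly your surjectivity argument written in coordinates rather than in the language of filtered maps and associated graded pieces. Your geometric-series variant $\sum_{j\geq0}(-\fii^{-1}\psi)^j$ is simply a compact repackaging of that same iterative construction, and the continuity/completeness points you flag are the only places where any care is needed; the paper handles them implicitly by working coefficientwise.
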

\begin{proof}
Multiplying by $\fii^{-1}$ we can assume that $\fii=\id$. We will write the elements of $\kk((t))$ as $a=\sum_ka_kt^k$.

Then let $b$ be a fixed formal Laurent series and let us see if there exists an $a\in\kk((t))$ such that $(\id+\psi)(a)=b$. Evidently, the exponents of the least powers of $t$ (which are called the orders) of both of $a$ and $b$ will be the same, so let us write
$$a=\sum_{k\geq m}a_kt^k\,,\,\psi(a)=\sum_{k\geq m+1}a'_kt^k\,\text{ and }\,b=\sum_{k\geq m}b_kt^k.$$
From the equation $(\id+\psi)(a)=b$ we deduce that $a_m=b_m$. Now call $a^1=a-a_mt^m$ and $b^1=b-(\id+\psi)(a_mt^m)$; both of them have order $m+1$. We have
$$(\id+\psi)a^1=(\id+\psi)a-(\id+\psi)(a_mt^m)=b^1.$$
Thus we can start the same process over again with $a^1$ and $b^1$. Since this can be continued for every power of $t$, we can deduce the surjectivity of $\id+\psi$. Moreover, if we take $b_k=0$ for every $k\in\Z$, it follows that every $a_k$ vanishes too, so $\id+\psi$ is also injective.
\end{proof}

\begin{defi}
Let $r$ be an element of $\kk$. Then we can define the operators $D_{t,r}=t\dd_t+r$ and analogously $D_{i,r}=x_i\dd_i+r$, for $i=1,\ldots,n$. We will write $\fii_r=\dd_t+rt^{-1}=t^{-1}D_{t,r}$ (note the sign change with respect to the previous notation). They are $\kk$-linear endomorphisms of $\kk((t))$, so we can also consider them to be operating within any $\kk((t))$-algebra by extension of scalars.
\end{defi}

\begin{nota}
It is easy to see that $D_{t,r}$ (and so $\fii_r$) is an automorphism of $\kk((t))$ for every $r$ not an integer and only for them, for $D_{t,r}$ sends a power $t^k$ of $t$ to $(k+r)t^k$. In this case we can define another family of operators that will play a fairly main role in the next section:
\end{nota}

\begin{defi}\label{A-operators sencillos}
Fix an element $\alpha$ of $\kk$, and let $r$ and $s$ be two other elements of $\kk$ such that $\alpha+s$ is not an integer. Then we can define the operator $A_{r,s}=t+r\fii_{\alpha+s}^{-1}$.

Let $R_n=\kk((t))[x_1,\ldots,x_n]$ and $\beta\in\kk$. We can also define the $\kk$-linear endomorphisms of $R_n$ given by $A_{\beta D_{i,r},s}=t+\beta D_{i,r}\fii_{\alpha+s}^{-1}$, where $i=1,\ldots,n$.
\end{defi}

In the following, for the sake of simplicity, we will denote by $A_r$, $A_{\beta D_{i,r}}$ and $D_{r}$ the operators $A_{r,0}$, $A_{\beta D_{i,r},0}$ and $D_{t,r}$, respectively.

\begin{nota}\label{Adifiso}
As before, $A_{r,s}$ is not always an automorphism of $\kk((t))$, as $A_{\beta D_{i,r},s}$ is of $R_n$. Since $A_{r,s}\fii_{\alpha+s}=D_{t,\alpha+r+s}$, the former is bijective whenever $\alpha+r+s$ is not an integer. Analogously, $A_{\beta D_{i,r},s}\fii_{\alpha+s}=\beta D_{i,r}+D_{t,\alpha+s}$. It sends $t^k\xx^{\underline{u}}$ to $(\beta(u_i+r)+\alpha+k+s)t^k\xx^{\underline{u}}$, so $A_{\beta D_{i,r},s}$ is bijective if and only if, for every integer $l$, we have that $\beta(l+r)+\alpha+s$ is not an integer.
\end{nota}

Now we could wonder about the commutativity of those operators that we have just defined. We will use the following lemma, whose proof is easy and left to the reader (for each relation, use some of the ones proved before and the Leibniz rule):

\begin{lema}\label{Adifcom}
Let $\alpha$ and $\beta$ be two elements of $\kk$, and $r$, $r'$, $s$ and $s'$ four other elements of $\kk$ such that neither $\alpha+s$ nor $\alpha+s'$ are integers. Then, the following relations hold:
\begin{enumerate}
\item $t\fia=\fii_{\alpha-1}t\,,\:\fia\fii_{\beta} =\fii_{\beta+1}\fii_{\alpha-1}.$
\item $A_{r,s}t=tA_{r,s+1}\,,\:A_{\beta D_{i,r},s}x_i=x_iA_{\beta D_{i,r+1},s}\,,\:A_{r,s}A_{r',s'}=A_{r', s'-1}A_{r,s+1}.$
\end{enumerate}
\end{lema}

\section{Two examples}

As we mentioned in the introduction, we will conclude this note by giving an example of an application of Theorem \ref{Koszul}, focusing on the case in which our morphism $f$ is defined by an arrangement of $n+1$ hyperplanes of $\AA^n$ in general position with multiplicities,in the end proving Theorem \ref{exponentes}. But first, let us treat another case as a warm up. We will indeed give an alternative proof of a well-known fact regarding quasi-homogeneous singularities:

\begin{prop}\label{casi homogeneo}
Let $f\in\kk[x_1,\ldots,x_n]$ be a quasi-homogeneous polynomial of degree $d$ with respect to a system of integer weights $v=(v_1,\ldots,v_n)$ such that $\gcd(v_1,\ldots,v_n)=1$. Then, $\alpha\in\kk$ is an exponent of some cohomology of $f_+\OO_{\AA^n}$ at the origin only if $d\alpha\in\Z$.
\end{prop}
\begin{proof}
By virtue of Theorem \ref{Koszul}, we will prove the equivalent statement that for any $\alpha$ such that $d\alpha$ is not an integer, the $\kk$-linear homomorphism $\Phi: R^{n+1}\lra R$ given by
$$\Phi=\left(f-t,\dd_1+f'_1\fia,\ldots, \dd_n+f'_n\fia\right)$$
is surjective. Note that in this case, $g(\xx)=1$ and then $R=\kk((t))[\xx]$.

Let us pick an element $c$ of $R$, and let us say that there exist $a$, and $n$ polynomials $b^i$ for every $i=1,\ldots,n$, so that $\Phi(a,b^1,\ldots,b^n)=c$. To prove that we can assume, without loss of generality, that $a$, each of the $b^i$ and $c$ are quasi-homogeneous (say of $v$-degrees $m,m+1,\ldots,m+1,m$ for $m\geq0$), allowing them to vanish. We will have
\begin{equation}\label{formulaqh}
\left\{\begin{array}{rcl}
\displaystyle fa+\sum_{i=1}^nf'_i\fia b^i&=&0,\\
\displaystyle-ta+\sum_{i=1}^n\dd_ib^i&=&c.\end{array}\right.
\end{equation}
Here, $f$ is a quasi-homogeneous polynomial, so the Euler formula $df=\sum_iv_if'_i$ holds. That can be thought of as a syzygy of the Jacobian ideal $(f,f'_1,\ldots,f'_n)$, whose first term $-d$ is of $v$-degree zero. Then we can assume that any other syzygy of that ideal deals only with the partial derivatives of $f$. As a consequence, we know from the first equation that there exist quasi-homogeneous polynomials $F, g_{i,j}\in R$ for $i$ running in some finite set of indexes $I$ and $j=1,\ldots,n$, such that
\begin{equation}\label{quasi}
\begin{array}{l}
a=-dF,\\
\displaystyle\fia b^j=v_jx_jF+\sum_{i\in I}s_{i,j}g_{i,j}\,j=1,\ldots,n,\end{array}
\end{equation}
the $s_{i,j}$ being the components of the syzygy $s_i$ of $(f'_1,\ldots,f'_n)$.

In fact, we do not need so much generality to achieve our goal. Namely, we will assume in the following that all $g_{i,j}$ are zero, greatly simplifying expression \ref{quasi}. That assumption will not affect the validity of the statement, as we will see. Substituting the new values of $f$ and its partial derivatives in \ref{quasi} we get that
$$tdF+\sum_{i=1}^n\dd_i\fiai v_ix_i F=tdF+\fiai\left(\sum_{i=1}^n(v_ix_i\dd_i+v_i)F\right)=
dA_{\frac{m+|v|}{d}}F=c;$$
the first equality is just commuting $x_i$ and $\dd_i$ and a consequence of the fact that $\fia$ commutes with anything independent of $t$, whereas the second uses the notation introduced in Definition \ref{A-operators sencillos} and the Euler formula for $F$. Note that if the system of weights $(v_1,\ldots,v_n)$ were not reduced, the common factors of $m+|v|$ would have canceled themselves with the respective ones of $d$, and the final operator $A:=A_{\frac{m+|v|}{d}}$ would be exactly the same.

Now if $d\alpha$ is not an integer, $A$ is invertible, and so the morphism $\Phi$ is surjective; note that if we had not assumed that the $g_{(i,j)}$ vanish, the argument could have been the same. Thus $\alpha$ cannot be an exponent at the origin of any of the cohomologies of $f_+\OO_{\AA^n}$, as we wanted to prove.
\end{proof}

Let us continue now towards the proof of Theorem \ref{exponentes} and set some notation of use from now on. Recall that $(w_0,\ldots,w_n)\in\Z_{>0}^{n+1}$ is some $(n+1)$-tuple of positive integers. Under a suitable linear change of variables, we are able to work with the polynomial $\lambda=x_1^{w_1}\cdot\ldots\cdot x_n^{w_n}(1-x_1-\cdots-x_n)^{w_0}$. The order of the exponents $w_0,\ldots,w_n$ is irrelevant; we could just reorder the variables or change one of them by $1-x_1-\cdots-x_n$. We have already indicated that this case turns out to be interesting when we study the Gauss-Manin cohomology of a generalized Dwork family and related to some problems arising in mirror symmetry. For a bigger number of hyperplanes, the computations seem quite complex, and we think that the particular case of having $n+1$ of them is interesting enough to see the applicability of the main result to other problems.

On the other hand, working with fewer hyperplanes is quite easy: assume that for some $r\geq0$ and every $i=0,\ldots,r$ we have $w_i=0$. Abusing some of the notation we can call our morphism the same as the polynomial, that is, such that $\lambda=x_{r+1}^{w_{r+1}}\cdot\ldots\cdot x_n^{w_n}$. The notion of exponent is deeply local, so we could work with the image of $\lambda$ outside the origin, that is to say, restrict it from $\AA^r\times\gm^{n-r}$ to $\gm$, without altering the set of exponents. Therefore under those assumptions we have by the K\"{u}nneth formula (cf. \cite[Prop. 1.5.30]{HTT}) that
$$\lambda_+\OO_{\AA^r\times\gm^{n-r}}\cong \left(\bigoplus_{i_1=1}^{w_{r+1}}\cdots\bigoplus_{i_{n-r}=1}^{w_n} \KK_{i_1/w_{r+1}}\ast\cdots\ast\KK_{i_{n-r}/w_n}\right)\otimes\left( \bigoplus_{i=-r}^0\OO_{\gm}^{\binom{r}{-i}}[-i]\right),$$
where the operation $\ast$ is the multiplicative convolution of $\DD_{\gm}$-modules defined as in \cite[5.1.7.1]{KaESDE}.

Note that for any $\alpha$ and $\beta$ in $\kk$,
$$\KK_\alpha\ast\KK_\beta\cong\left\{\begin{array}{ll}
\KK_\alpha[1]\oplus\KK_\alpha[0] & \text{ if }\alpha\equiv\beta\!\mod\Z,\\
0 & \text{ otherwise,}\end{array}\right.$$
by virtue of \cite[Lems. 5.2.1, 6.3.4]{KaESDE}. Then by repeatedly applying the claim we can affirm that
$$\lambda_+\OO_{\AA^n}\cong\left(\bigoplus_{\alpha\in A}\KK_\alpha\right)\otimes\left(\bigoplus_{i=-n+1}^0 \OO_{\gm}^{\binom{n-1}{-i}}[-i]\right),$$
where $A$ is the set of rational numbers $\alpha\in(0,1]$ for which there exist $i_1,\ldots,i_{n-r}$ such that $\alpha=i_j/w_{r+j}$ for every $j$. Note that, by the following lemma, $A=\{1\}$ if and only if $\gcd(w_{r+1},\ldots,w_n)=1$.

\begin{lema}
Let $n>1$ be an integer, and let $(w_1,\ldots,w_n)$ be an $n$-tuple of positive integers. The following conditions are equivalent:
\begin{enumerate}[1)]
\item There exists another $n$-tuple $(a_1,\ldots,a_n)$ of positive integers such that $a_i<w_i$ for every $i=1,\ldots,n$, and the quotients $a_i/w_i$ are all equal.
\item $\gcd(w_1,\ldots,w_n)>1$.
\end{enumerate}
\end{lema}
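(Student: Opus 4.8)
The plan is to prove the two implications separately, in each case working with the common value $c$ of the ratios $a_i/w_i$, which I will write as a reduced fraction $c=p/q$ with $\gcd(p,q)=1$ and $q>0$.

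For the implication from the gcd condition to the existence of the tuple, I would argue constructively. Setting $d=\gcd(w_1,\ldots,w_n)$ and writing $w_i=d\,m_i$ with $m_i\in\Z_{>0}$, I would propose the candidate $a_i=(d-1)m_i$. Then $a_i/w_i=(d-1)/d$ is independent of $i$, and since $d>1$ we have $d-1\geq1$, so each $a_i=(d-1)m_i\geq m_i\geq1$ is a positive integer, while $a_i=(d-1)m_i<d\,m_i=w_i$. Thus the required tuple exists. The only place where the hypothesis $d>1$ (rather than merely $d\geq1$) enters is precisely to guarantee positivity of the $a_i$.

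For the converse, I would suppose that a common ratio $c=a_i/w_i$ exists and exploit the constraints $0<a_i<w_i$. These force $0<c<1$, so in the reduced form $c=p/q$ one has $0<p<q$, and in particular $q\geq2$. From the relations $a_i q=w_i p$ together with $\gcd(p,q)=1$, the key step is to deduce that $q\mid w_i$ for every $i$; hence $q$ divides $\gcd(w_1,\ldots,w_n)$, giving $\gcd(w_1,\ldots,w_n)\geq q\geq2>1$.

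Since both directions are elementary, I do not expect a serious obstacle; the one point requiring care is the coprimality argument $\gcd(p,q)=1,\ a_iq=w_ip\Rightarrow q\mid w_i$ in the converse, together with the matching observation that the strict inequalities $a_i<w_i$ are exactly what prevents $q=1$. I expect the whole argument to be short.
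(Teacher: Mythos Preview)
Your proof is correct, and in fact cleaner than the paper's. For the implication from $\gcd>1$ to the existence of the tuple, the paper just says this direction is easy without spelling anything out; your explicit choice $a_i=(d-1)m_i$ is a perfectly good realisation of that.

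The genuine difference lies in the converse. The paper argues as follows: from $a_1/w_1=a_2/w_2$ it deduces that $w_1$ and $w_2$ share a prime $p$; it then reorders so that $p\mid w_1,\ldots,w_r$ but $p\nmid w_{r+1},\ldots,w_n$, observes that the cross relations $a_jw_i=a_iw_j$ force $p\mid a_j$ for $j\leq r$, cancels a factor of $p$ from those fractions, and iterates until $a_1$ and $w_1$ are coprime; a final application of the cross relations then shows the surviving prime divides every $w_i$. Your argument bypasses this reduction entirely: writing the common ratio in lowest terms $c=p/q$ immediately yields a single integer $q\geq2$ that divides every $w_i$ via $a_iq=w_ip$ and $\gcd(p,q)=1$. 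This is exactly the endpoint the paper's iterative reduction is working toward, reached in one step. Your route is shorter and more transparent; the paper's route, while correct, carries the overhead of the prime-by-prime reduction without any compensating extra information.
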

\begin{proof}
The upwards part of the equivalence is easy; the other implication can be proved by contradiction. Indeed, assume that $\gcd(w_1,\ldots,w_n)=1$ and apply B\'{e}zout's identity to obtain some integers $c_1,\ldots,c_n$ such that $1=\sum_ic_iw_i$. But then, calling $q=a_i/w_i$ for any $i=1,\ldots,n$, $1>q=\sum_ic_iqw_i=\sum_ic_ia_i$, which cannot (as it should) be a positive integer. Thus $\gcd(w_1,\ldots,w_n)>1$ and we are done.
\end{proof}

Let us deal then with the case of $n+1$ hyperplanes. However, note that we will not calculate the exponents of $\lambda_+\OO_{\AA^n}$, but stay with just a finite set of rational numbers as candidates (see Remark \ref{notacalculo} after the proof).

\noindent\textit{Proof of Theorem \ref{exponentes}.} As with the previous proposition, we will prove the equivalent statement that for any $\alpha$ such that $w_i\alpha$ is not an integer for any $i=0,\ldots,n$, the $\kk$-linear homomorphism $\Phi: R^{n+1}\lra R$ given by
$$\Phi=\left(\lambda-t,\dd_1+\lambda'_1\fia,\ldots, \dd_n+\lambda'_n\fia\right)$$
is surjective. Note that again in this case, $g(\xx)=1$ and $R=\kk((t))[\xx]$.

Let us assume that $n\geq2$, but we will comment throughout the proof on the changes needed to treat the case $n=1$.

For the sake of simplicity, let us denote by $\sigma$ and $d$, respectively, the sums $x_1+\cdots+x_n$ and $\sum w_i$. In the following, $l_i$ will mean $w_i\sigma+w_0x_i$ for each $i=1,\ldots,n$. Therefore, $\lambda'_i=\xx^{w-e_i}(1-\sigma)^{w_0-1}(w_i-l_i)$ for every $i$.

Exactly as we did in the previous proposition, let us pick an element $c$ of $R$, that we can assume as before without loss of generality to be homogeneous of degree $m\geq0$, and let us say that there exist $a$, and $n$ polynomials $b^i$ for every $i=1,\ldots,n$, so that $\Phi(a,b^1,\ldots,b^n)=c$, and see which conditions we have to impose on them. We will express the unknown polynomials $a$ and the $b^i$ in terms of some others and deduce some conditions on the new ones, although here those conditions will not be as simple as in the previous example. Then we will give a system of equations such that the existence of solutions to it implies the existence of $a$ and the $b^i$. In the end we will show how to find a solution of such a system using that none of $w_0\alpha$,\ldots,$w_n\alpha$ is an integer.

So let us return to our $a$, $b^i$ and $c$, such that $\Phi(a,b^1,\ldots,b^n)=c$. For every $r\geq0$, we will have in general that
\begin{equation}\label{formulageneral}
\sum_{j+k=r}\lambda_j a_k-ta_r+\sum_{i=1}^n\sum_{j+k=r}(\lambda'_i)_j\fia b_k^i+\sum_{i=1}^n\dd_ib_{r+1}^i=c_r.
\end{equation}
We will also assume that $a$ has only nonvanishing $k$th homogeneous components for $k=m,\ldots,m+d-1$, and each of the $b^i$ for $k=m+1,\ldots,m+d$. Thus our general formula \eqref{formulageneral} will be useful for us only for $r=m,\ldots,m+2d-1$. We will also assume $\alpha$ to be noninteger in order to be able to invert $\fia$ in the following.

Let us focus first on the expression
\begin{equation}\label{formulaalta}
\lambda a+\sum_i\lambda'_i\fia b^i=0
\end{equation}
that holds for degrees between $m+d$ and $m+2d-1$. From this fact we will obtain some additional, useful information about $a$ and the $b^i$.

If we take the factors common to every summand of formula \eqref{formulaalta}, we get
\begin{equation}\label{sicigia}
(1-\sigma)^{w_0-1}\left(a\xx^w(1-\sigma)+\sum_{i=1}^n\fia b^i\xx^{w-e_i} (w_i-l_i)\right)=0,
\end{equation}
so $(a(1-\sigma),\fia b^1(w_1-l_1),\ldots,\fia b^n(w_n-l_n))$ is a syzygy of the sequence consisting of the monomial $\xx^w$ and its $n$ partial derivatives. Therefore, since it forms a monomial ideal, $x_i$ divides $\fia b^i$ for every $i$. Let us write $\fia b^i=x_i\bb^i$, so that we can divide by $\xx^w$ in formula \eqref{sicigia} to obtain
\begin{equation}\label{sicigiasimple}
a(1-\sigma)+\sum_i\bb^i(w_i-l_i)=0,
\end{equation}
which, recall, will be valid only for degrees from $m+1$ to $m+d$.

Let us start then with formula \eqref{sicigiasimple} by degree $m+d$. We have
$$a_{m+d-1}\sigma+\sum_{i=1}^nl_i \bb_{m+d-1}^i= \sum_{i=1}^n\left(a_{m+d-1}+\sum_{j=1}^nw_j\bb_{m+d-1}^j+w_0 \bb_{m+d-1}^i\right)x_i=0.$$
We could argue that the $x_1,\ldots,x_n$ form a regular sequence in order to obtain an expression for their ``coefficients'' in terms of other polynomials.
Nevertheless, as we did in the proof of the previous Proposition, we will make some assumptions to simplify our calculations. Namely, we will assume that every sum $a_{m+d-1}+\sum_jw_j\bb_{m+d-1}^j+w_0 \bb_{m+d-1}^i$ vanishes. Moreover, we will also assume that all of the $\bb_{m+d-1}^i$ are equal. However, as we will explain and see alongside the proof, all these suppositions and the subsequent ones will not prevent us from proving the theorem.

Let us now rename $\bb_{m+d-1}^i=f^0$, for every $i$; $f^0$ is a homogeneous polynomial of degree $m+d-1$. In the end we can also write that $a_{m+d-1}=-df^0$.

Let us go on by taking $r=m+d-1$. Our equation \eqref{sicigiasimple} turns into
$$a_{m+d-2}\sigma-a_{m+d-1}+\sum_{i=1}^nl_i \bb_{m+d-2}^i-\sum_{i=1}^nw_i \bb_{m+d-1}^i=0.$$
We can replace $a_{m+d-1}$ and the $\bb_{m+d-1}^i$ by their values in terms of $f^0$, and get
\begin{equation}\label{fcero}
a_{m+d-2}\sigma+\sum_{i=1}^nl_i \bb_{m+d-2}^i+w_0f^0=0.
\end{equation}
Note that, since $f^0$ is homogeneous of degree $m+d-1>0$, there exist $n$ homogeneous polynomials $f_{(1)}^0,\ldots,f_{(n)}^0\in R$ of degree $m+d-2$ such that $f^0=\sum_ix_if_{(i)}^0$. Replace $f^0$ by that sum in the formula above. In addition, as before, assume that every factor of $x_i$ in formula \ref{fcero} is zero for $i=1,\ldots,n$ and that all of the sums $\bb_{m+d-2}^i+f_{(i)}^0$ are equal to some new homogeneous polynomial in $R$, named $f^{1}$, of degree $m+d-2$ (note that if $n=w_0=w_1=1$ and $m=0$, it would be constant, stopping this process here). Finally we have
$$\begin{array}{l}
\displaystyle a_{m+d-2}=-df^{1}+\sum_{j=1}^nw_jf_{(j)}^0\\
\displaystyle \bb_{m+d-2}^i=f^{1}-f_{(i)}^0,\,i=1,\ldots,n.\end{array}$$
Let us move on and see what happens with formula \eqref{sicigiasimple} when the degree is $m+d-2$. Our favorite formula reads
$$a_{m+d-3}\sigma-a_{m+d-2}+\sum_{i=1}^nl_i \bb_{m+d-3}^i-\sum_{i=1}^nw_i \bb_{m+d-2}^i=0.$$
Writing $a_{m+d-2}$ and the $\bb_{m+d-2}^i$ as with higher degrees and proceeding like with degree $m+d-1$ yields that the terms in the $f_{(i)}^0$ vanish, so we can proceed exactly as in the previous step.

More concretely, taking lower and lower degrees in \eqref{sicigiasimple} as long as it is possible and renaming the subsequent $f_{(j)}^{k}$ that appear, we finally get
\begin{equation}\label{formulasdeayb}
\begin{array}{l}
\displaystyle a=\sum_{i=1}^n(-dx_i+w_i)F_{(i)}-d\tilde{F}\\
\displaystyle\fia b^i=x_i\left(\sum_{j=1}^nx_jF_{(j)}-F_{(i)}+\tilde{F}\right) ,\,i=1,\ldots,n,\end{array}
\end{equation}
where the $F_{(i)}$ are polynomials of $R$ that have only nonvanishing $k$th homogeneous components for $k=m,\ldots,m+d-2$, and $\tilde{F}$ is a homogeneous polynomial of degree $m$. In other words, each of the $f_{(j)}^{k}$ is now the homogeneous component of degree $m+d-2-k$ of $F_{(j)}$, for $j=1,\ldots,n$ and $k=0,\ldots,d-2$, and $\tilde{F}$ is just $f^{d-1}$.

Summing up, we have been able to express our first unknowns, the polynomials $a$ and $b^i$, in terms of many other polynomials, and we do not know anything about them but their degrees. However, recall that we still have $d$ equations left arising from our general formula \eqref{formulageneral} in degrees $m$ to $m+d-1$. These are the ones that will give us some information about our new unknowns, and are, in reverse order of degree (note that $\lambda$ lies in degrees $d-w_0$ to $d$),
\begin{equation}\label{formulabaja}
\left\{\begin{array}{l}
\displaystyle\sum_{j+k=r}\lambda_j a_k+\displaystyle\sum_{i=1}^n\displaystyle\sum_{j+k=r}(\lambda'_i)_j\fia b_k^i\\
-ta_r+\displaystyle\sum_{i=1}^n\dd_ib_{r+1}^i=0,\,r=m+d-w_0,\ldots,m+d-1,\\
-ta_r+\displaystyle\sum_{i=1}^n\dd_ib_{r+1}^i=0,\,r=m+1,\ldots,m+d-w_0-1,\\
-ta_m+\displaystyle\sum_{i=1}^n\dd_i b_{m+1}^i=c_m.
\end{array}\right.
\end{equation}
Let us find an expression for the system above. Recall the new expressions for $a$ and $b$ in \eqref{formulasdeayb} and take into account those of $\lambda$ and its partial derivatives:
\begin{align*}
\displaystyle\lambda_{d-w_0+j}=&\binom{w_0}{j}(-1)^j\xx^w\sigma^j,\\
\lambda'_{d-w_0+j-1,i}=&\left\{\begin{array}{ll}
w_i\xx^{w-e_i}, & \,j=0,\\
\displaystyle(-1)^j\xx^{w-e_i}\left(\binom{w_0}{j}\sigma^jw_i+ \binom{w_0-1}{j-1}w_0\sigma^{j-1}x_i\right), & \,j=1,\ldots,w_0.\end{array}\right.\end{align*}
Now, if we put all that into the remaining equations in \eqref{formulabaja}, we obtain the system
{\footnotesize\begin{equation}\label{sistema}
\left\{\begin{array}{l}
\displaystyle\sum_{i=1}^ndx_iA_{\frac{m+d+n-1}{d}}F_{(i),m+d-2}+(-1)^{w_0}w_0 \xx^w\sigma^{w_0-1}\tilde{F}=0,\\
\vdots\\
\displaystyle\sum_{i=1}^ndx_iA_{\frac{m+d+n-r}{d}}F_{(i),m+d-r-1}- \sum_{i=1}^nw_iA_{\frac{D_{i,1}}{w_i}}F_{(i),m+d-r}\\
\displaystyle+(-1)^{w_0-r+1}w_0 \binom{w_0-1}{w_0-r}\xx^w\sigma^{w_0-r}\tilde{F}=0,\, r=2,\ldots,w_0-1,\\
\vdots\\
\displaystyle\sum_{i=1}^ndx_iA_{\frac{m+d+n-w_0}{d}}F_{(i),m+d-w_0-1}- \sum_{i=1}^nw_iA_{\frac{D_{i,1}}{w_i}}F_{(i),m+d-w_0}-w_0\xx^w\tilde{F}=0,\\
\vdots\\
\displaystyle\sum_{i=1}^ndx_iA_{\frac{m+n+r}{d}}F_{(i),m+r-1} -\sum_{i=1}^nw_iA_{\frac{D_{i,1}}{w_i}}F_{(i),m+r}=0,\,r=1,\ldots,d-w_0-1,\\
\vdots\\
dA_{\frac{m+n}{d}}\tilde{F} -\displaystyle\sum_{i=1}^nw_iA_{\frac{D_{i,1}}{w_i}}F_{(i),m}=c_m.
\end{array}\right.
\end{equation}}
Note that, as in the proof of Proposition \ref{casi homogeneo}, the operators $A_\beta$ and $A_{\frac{D_i,1}{w_i}}$ are obtained from the summand $-ta_r$ in every homogeneous equation of (\ref{formulabaja}), together with $\fiai\sum_i\dd_ix_i$ of some homogeneous polynomial. Applying Euler's formula allows us to get rid of the sum of the derivatives. Let us try now to prove that this system has a solution, and then, that $\Phi$ is surjective.

Let us denote by $S_k$ the set $\{\underline{u}\in\mathbb{N}^n\,:\,|\underline{u}|=k\}$. We will say that the support of a homogeneous polynomial $P$ of degree $k$ is maximal if it is the whole $S_k$. Write $\bar{F}$ for $\sum_ix_iF_{(i)}$. Note that this polynomial comes from the $f^k$ obtained in the first part of the proof. For each $k=1,\ldots,m+d-1$ we obviously cannot have a priori that $\supp(\bar{F}_{m+k})\neq S_{m+k}$. Then we could choose the support of all of the $F_{(i),m+k}$, up to a reordering on the set of monomials that appear in each one. For instance, suppose $n=2$ and $m+k=2$. Then $\bar{F}_2=F_{20}x^2+F_{11}xy+F_{02}y^2$, and we could take either $F_{(1),1}=F_{20}x+F_{11}y$ and $F_{(2),1}=F_{22}y$ or $F_{(1),1}=F_{20}x$ and $F_{(2),1}=F_{11}x+F_{22}y$.

As a consequence, without loss of generality, we can, and will, assume the maximality of the supports of the polynomials
$$F_{(1),m+k}\text{ for }k=0,\ldots,w_1-1,$$
and for every $i=2,\ldots,n$,
$$F_{(i),m+k}\text{ for }k=w_1+\cdots+w_{i-1},\ldots,w_1+\cdots+w_{i}-1.$$
(Obviously this definition of maximality and the assumptions on the $F_{(i),m+k}$ are useless when $n=1$.) Thanks to the choice of $\alpha$ and Remark \ref{Adifiso} we know that each $A_{\frac{D_{i,1}}{w_i}}$ is invertible, so we can solve any $F_{(i),m+r}$ of maximal support in terms of $F_{(i),m+r-1}$ and $\tilde{F}$, for $r=0,\ldots,d-w_0-1$, over all the possible support of the corresponding equation.

Now is when the choice of the supports of the $F_{(i),m+k}$ makes sense. Start at the last equation of \eqref{sistema} by solving $F_{(1),m}$ and replace its value in the preceding equation, and do this with the polynomial $F_{(i),m+k}$ having a maximal support until we reach the $w_0$th equation. Assume that every unused polynomial $F_{(i),m+k}$ vanishes (again this assumption does not endanger the generality of the proof). As a consequence of all that, we reduce ourselves to dealing with a newer system of only $w_0$ equations, consisting of the first $w_0-1$ equations of the preceding system and a new $w_0$th equation, namely
$$\xx^w\left(\Upsilon A_{\frac{m+n}{d}}-w_0\right)\tilde{F}- \sum_{i=1}^nw_iA_{\frac{D_{i,1}}{w_i}}F_{(i),m+d-w_0}=\xx^w\Upsilon c_m,$$
where, by Lemma \ref{Adifcom},
\begin{align*}
  \Upsilon= & d^d\prod_{i=1}^n  w_i^{-w_i}A_{\frac{m+d+n-1}{d}} A_{\frac{D_{n,w_n}}{w_n}}^{-1}\cdot\ldots\cdot A_{\frac{m+d+n-w_n}{d}}A_{\frac{D_{n,1}}{w_n}}^{-1}\cdot\ldots\cdot A_{\frac{m+n+1}{d}}A_{\frac{D_{1,1}}{w_1}}^{-1}\\
  = & \prod_{k=1}^{d-1}A_{\frac{m+d+n-k}{d},k-1}\left(\prod_{i=1}^n \prod_{j=1}^{w_i}A_{\frac{D_{i,j}}{w_i}, j+w_1+\cdots+w_{i-1}-1}\right)^{-1}.
\end{align*}

Let us simplify the system once more; as before, although we lose some generality, this new assumption will not only preserve enough of it, but leave the equations in a more manageable form. More concretely, assume that, for every $r=2,\ldots,w_0$, the polynomials $F_{(i),m+d-r}$ coincide for every $i=1,\ldots,n$ and are divisible by $\xx^w\sigma^{w_0-r}$. Write $F_{(i),m+d-r}=\xx^w\sigma^{w_0-r}F_{m+d-r}$ for all those values of $i$ and $r$ (note that every polynomial $F_{m+d-r}$ is homogeneous of degree $m$). Thanks to that hypothesis, we can divide by $\xx^w\sigma^{w_0-r+1}$ each corresponding equation to get the simpler system of homogeneous polynomials of degree $m$
\begin{equation}\label{sistemacorto}
\left\{\begin{array}{l}
dA_{\frac{m+d+n-1}{d}}F_{m+d-2}+(-1)^{w_0}w_0\tilde{F}=0,\\
\vdots\\
dA_{\frac{m+d+n-r}{d}}F_{m+d-r-1}- (d-w_0)A_{\frac{m+d+n-r}{d-w_0}}F_{m+d-r}\\
\displaystyle +(-1)^{w_0-r+1}w_0 \binom{w_0-1}{w_0-r}\tilde{F}=0,\, r=2,\ldots,w_0-1,\\
\vdots\\
\left(\Upsilon A_{\frac{m+n}{d}}-w_0\right)\tilde{F}- (d-w_0)A_{\frac{m+d+n-w_0}{d-w_0}}F_{m+d-w_0}=\Upsilon c_m. \end{array}\right.
\end{equation}
This system is the one that we will prove to have a solution, so that we finally show that $\Phi$ is surjective.

The $A_\beta$ do not commute pairwise, so in principle we cannot deal with the determinant of the matrix of the system. However, under an easy change of variables, we can see the $A_\beta$ as elements of a commutative subring of the ring of endomorphisms of $R$. By our assumption on $\alpha$, the endomorphism $D_\alpha$ of $\kk((t))$ is invertible, so we can define a new operator $B_\beta$ as $A_\beta\alpha t^{-1}=\alpha(1+\beta D_\alpha^{-1})$, which is an element of $\kk[D_\alpha^{-1}]$, a commutative ring whose action on $\kk((t))$ is defined by $D_\alpha^{-1}t^l=(l+\alpha)^{-1}t^l$. Now $\alpha t^{-1}$ is an isomorphism of $R$, so we can rename the $F_k$ to mean $t\alpha^{-1}F_k$, for each $k=m+d-w_0,\ldots,m+d-2$.

Now every coefficient of system \eqref{sistemacorto} is of the form of some $B_\beta$ and thus lives in $\kk[D_\alpha^{-1}]$, except for $\Upsilon B_{\frac{m+n}{d}}$ in the final equation, which has degree 1 in $t$. Nevertheless, note that this operator goes together with $-w_0$, so by Lemma \ref{deforgrado} its sum is an automorphism of $R$. Moreover, regarding just the existence of solutions to the system and not their actual form, we can restrict ourselves to working only with $-w_0$.

If $w_0=1$, then we have only a single equation, from which we can solve $\tilde{F}$ and thus the system, showing the existence of solutions. In the following we will assume that $w_0\geq2$.

We have finally arrived at a point where we have a matrix of coefficients in $\kk[D_\alpha^{-1}]$, so we just need to show that its determinant is an invertible endomorphism of $\kk((t))$. If we manage to do so, we will have proved that system \eqref{sistemacorto} has a solution, but that implies the existence of a solution to system \eqref{sistema} and this, in turn, implies the existence of a preimage to our $c$.

Expanding it along the last column, the determinant is
{\scriptsize$$\left|\begin{array}{ccccc}
dB_{\frac{m+d+n-1}{d}} & 0 & \cdots & 0 & (-1)^{w_0}w_0\\
-(d-w_0)B_{\frac{m+d+n-2}{d-w_0}} & dB_{\frac{m+d+n-2}{d}} & \cdots & 0 & (-1)^{w_0-1}w_0(w_0-1)\\
0 & -(d-w_0)B_{\frac{m+d+n-3}{d-w_0}} & \ddots & \vdots & \vdots\\
\vdots & \vdots & \ddots & dB_{\frac{m+d+n-w_0+1}{d}} & w_0(w_0-1)\\
0 & 0 & \cdots & -(d-w_0)B_{\frac{m+d+n-w_0}{d-w_0}} & -w_0
\end{array}\right|=$$}
$$=-w_0\sum_{r=1}^{w_0}\binom{w_0-1}{w_0-r}(-1)^{w_0-r} \prod_{k=1}^{r-1}d^{r-1}B_{\frac{m+d+n-k}{d}} \prod_{k=r}^{w_0-1}(d-w_0)^{w_0-r}B_{\frac{m+d+n-k-1}{d-w_0}}.$$
Call the determinant above $-w_0\Delta_\alpha$. Recall that, up to now, we have used that $w_i\alpha$ is not an integer for any $i=1,\ldots,n$. Now is when we will use that $w_0\alpha$ is not an integer either. Since every operator $B_\beta$ preserves powers of $t$ up to a coefficient in $\kk$, so does $\Delta_\alpha$; say $\Delta_\alpha\left(t^l\right)=d_{\alpha,l}t^l$. But then there will be some power of $t$ in its kernel if and only if $\Delta_\alpha$ is not bijective. Therefore, we need to show just that, for the values of $\alpha$ under consideration, $d_{\alpha,l}$ does not vanish for every $l\in\Z$. It is easy to see that $d_{\alpha,l}$ is
{\scriptsize $$\sum_{r=1}^{w_0}\binom{w_0-1}{w_0-r}(-1)^{w_0-r} \prod_{k=1}^{r-1}\left(d\alpha+\alpha\frac{m+d+n-k}{l+\alpha}\right) \prod_{k=r}^{w_0-1}\left((d-w_0)\alpha+\alpha\frac{m+d+n-k-1}{l+\alpha} \right)$$}
{\footnotesize$$=q_{\alpha,l}^{w_0-1} \sum_{r=1}^{w_0}\binom{w_0-1}{w_0-r}(-1)^{w_0-r} \prod_{k=1}^{r-1}(d(\alpha+l)+m+d+n-k) \prod_{k=r}^{w_0-1}((d-w_0)(\alpha+l)+m+d+n-k-1),$$}
where $q_{\alpha,l}$ is the quotient $\alpha/(l+\alpha)$. Up to the factor $q_{\alpha,l}^{w_0-1}$, the expression above is a polynomial in $\alpha$ of degree $w_0-1$, so there will be at most $w_0-1$ values of $\alpha$ so that it vanishes. In fact, for a fixed $l$, they are $-l-a/w_0$, for $a=1,\ldots,w_0-1$.

Indeed, let $a$ be as above. Then $q_{-l-a/w_0,l}^{-(w_0-1)}d_{-l-a/w_0,l}$ can be written as
{\footnotesize$$\prod_{k=1}^{w_0-a}\left(-\frac{d}{w_0}a+m+d+n-k\right) \sum_{r=1}^{w_0}\binom{w_0-1}{w_0-r}(-1)^{w_0-r} \prod_{j=1}^{a-1}\left(-\frac{d}{w_0}a+m+n+d-r+j\right)$$}
$$=C_a\sum_{r=1}^{w_0}\binom{w_0-1}{w_0-r}(-1)^{w_0-r}p_a(r),$$
where $C_a$ and $p_a$ are, respectively, a constant and a polynomial of degree $a-1\leq w_0-2$. Now thanks to the following lemma, we can deduce the vanishing of the determinant, so we have finally found that for every $\alpha$ such that $w_i\alpha$ is not an integer, the original system has a solution.

All of this process could be made independently of the choice of $m$, $n$, all of the $w_i$ and $c_m$, so it finally proves the surjectivity of $\Phi$.

The claim about the multiplicity of the exponents follows from two facts: the order of the $w_i$ does not play any role in the complex $\lambda_+\OO_{\AA^n}$ as we commented in the introduction to the context of the proposition, and within each set of possible exponents $\left\{1/w_i,\ldots,(w_i-1)/w_i\right\}$, the proof shows that whichever we use, the outcome is the same because of the different possible values of $m$.
\hfill$\square$

\begin{nota}\label{notacalculo}
Note that we already knew before the proposition that by the monodromy theorem, the exponents were to be rational. What this result provides, with respect to that fact, is a much shorter set of possible exponents.

The converse of the last proposition holds in a stronger way: every cohomology of $\lambda_+\OO_{\AA^n}$ is constant, except the last one, $\HH^0\lambda_+\OO_{\AA^n}$, whose exponents are exactly those of the form $j/w_i$, for $j=1,\ldots,w_i$ and $i=0,\ldots,n$. However, it needs much more preparatory work; it can be found in \cite[\S5]{CaDwork}.
\end{nota}

\begin{lema}
Let $m$, $n$ be two integers such that $0\leq m<n-1$. Then,
$$\sum_{k=1}^n\binom{n-1}{n-k}(-1)^{n-k}k^m=0.$$
\end{lema}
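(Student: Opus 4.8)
The plan is to prove this as the vanishing of an iterated Euler operator applied to a single generating polynomial. (I read the hypothesis $0\geq m<n-1$ as a typo for $0\leq m<n-1$, since otherwise $k^m$ is not a polynomial in $k$ and the statement has no natural meaning.) First I would reindex the sum by setting $j=n-k$, turning it into $\sum_{j=0}^{n-1}\binom{n-1}{j}(-1)^j(n-j)^m$, and I would identify the companion generating function
$$\sum_{k=1}^n\binom{n-1}{n-k}(-1)^{n-k}x^k=x\sum_{j=0}^{n-1}\binom{n-1}{j}(-1)^jx^{n-1-j}=x(x-1)^{n-1},$$
which is just the binomial theorem. This is the key structural observation: the coefficients $\binom{n-1}{n-k}(-1)^{n-k}$ are precisely those of the polynomial $x(x-1)^{n-1}$, which has a zero of order $n-1$ at $x=1$.

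Next I would introduce the Euler operator $\theta=x\,\frac{d}{dx}$, which satisfies $\theta^m x^k=k^m x^k$. Applying $\theta^m$ to both sides of the identity above and then evaluating at $x=1$ yields exactly
$$\sum_{k=1}^n\binom{n-1}{n-k}(-1)^{n-k}k^m=\bigl(\theta^m\bigl[x(x-1)^{n-1}\bigr]\bigr)\big|_{x=1},$$
so the entire problem reduces to showing that $\theta^m\bigl[x(x-1)^{n-1}\bigr]$ vanishes at $x=1$.

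Finally I would run an order-of-vanishing bookkeeping at $x=1$. The function $x(x-1)^{n-1}$ vanishes there to order $n-1$, and each application of $\theta=x\,\frac{d}{dx}$ lowers this order by at most one: differentiation lowers the order of a zero by exactly one, while multiplication by $x$ does not change it since $x$ does not vanish at $x=1$. Hence $\theta^m\bigl[x(x-1)^{n-1}\bigr]$ vanishes to order at least $(n-1)-m\geq 1$ at $x=1$ (using $m<n-1$), and in particular it vanishes at $x=1$, which is the claim. The only point demanding a little care — the sole real ``obstacle'' — is this bookkeeping on the order of the zero; equivalently, one can phrase the whole argument as the statement that the $(n-1)$-th finite difference of the degree-$m$ polynomial $j\mapsto(n-j)^m$ is zero whenever $m<n-1$, which makes the vanishing transparent. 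By linearity the same conclusion then holds with $k^m$ replaced by any polynomial in $k$ of degree $<n-1$, which is the form actually invoked in the proof of the proposition.
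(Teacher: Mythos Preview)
Your proof is correct, and your reading of the hypothesis as $0\leq m<n-1$ matches what the paper intends (the application in the proposition requires the identity for any polynomial in $k$ of degree at most $w_0-2$, so nonnegative integer exponents are what is at stake).

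Your approach is genuinely different from the paper's. The paper proceeds by a direct double induction: after the same reindexing you perform, it reduces to showing $\sum_{k=0}^{n-1}\binom{n-1}{k}(-1)^k k^m=0$, then splits $\binom{n-1}{k}=\binom{n-2}{k-1}+\binom{n-2}{k}$ via Pascal's rule and telescopes to obtain $\sum_{k}\binom{n-2}{k}(-1)^k\bigl(k^m-(k+1)^m\bigr)$, which is the same shape with $n$ replaced by $n-1$ and $k^m$ replaced by a polynomial of degree $m-1$; the induction then closes. This is precisely the classical inductive proof that the $(n-1)$-th finite difference annihilates polynomials of degree $<n-1$, which you mention only as an aside at the end. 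Your primary argument instead packages the coefficients into the polynomial $x(x-1)^{n-1}$ and reads the sum as $\bigl(\theta^m[x(x-1)^{n-1}]\bigr)\big|_{x=1}$, reducing the claim to the one-line observation that $\theta$ drops the order of vanishing at $x=1$ by at most one. This is cleaner and more conceptual: it makes the role of the bound $m<n-1$ completely transparent and yields at once the linearity statement (replacing $k^m$ by any polynomial of degree $<n-1$) that the proposition actually needs. The paper's induction buys nothing extra here; your generating-function argument is a strict improvement in clarity.
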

\begin{proof}
First of all rewrite the formula above as
$$\sum_{k=0}^{n-1}\binom{n-1}{k}(-1)^k(n-k)^m,$$
which is obviously true if $n=2$, so assume $n>2$. We will be done as long as we can show that $\sum_{k=1}^{n-1}\binom{n-1}{k}(-1)^kk^m=0$ for every $n$ and $m<n-1$. Let us do it by induction on $n$ and $m$. If $n=3$ or $m=0$ it is also very easy to prove it.

Let us go therefore for a general $n$, take some $n-1>m>0$ and assume the validity of $\sum_{k=1}^{a-1}\binom{a-1}{k}(-1)^kk^b=0$ for every $a<n$ and $b<\min(m,a-1)$. Now note that
$$\sum_{k=1}^{n-1}\binom{n-1}{k}(-1)^kk^m=(-1)^{n-1}(n-1)^m+ \sum_{k=1}^{n-2} \left(\binom{n-2}{k-1}+\binom{n-2}{k}\right)(-1)^kk^m$$
$$=\sum_{k=1}^{n-2}\binom{n-2}{k}(-1)^k\left(k^m- (k+1)^m\right).$$
Since $k^m-(k+1)^m$ is a polynomial of degree $m-1$ in $k$, we just need to apply the induction hypothesis to finish.
\end{proof}

\end{document}